\newcommand*{\mint}[1]{%
  \mint@l{#1}{}%
}
\newcommand*{\mint@l}[2]{%
  \@ifnextchar\limits{%
    \mint@l{#1}%
  }{%
    \@ifnextchar\nolimits{%
      \mint@l{#1}%
    }{%
      \@ifnextchar\displaylimits{%
        \mint@l{#1}%
      }{%
        \mint@s{#2}{#1}%
      }%
    }%
  }%
}
\newcommand*{\mint@s}[2]{%
  \@ifnextchar_{%
    \mint@sub{#1}{#2}%
  }{%
    \@ifnextchar^{%
      \mint@sup{#1}{#2}%
    }{%
      \mint@{#1}{#2}{}{}%
    }%
  }%
}
\def\mint@sub#1#2_#3{%
  \@ifnextchar^{%
    \mint@sub@sup{#1}{#2}{#3}%
  }{%
    \mint@{#1}{#2}{#3}{}%
  }%
}
\def\mint@sup#1#2^#3{%
  \@ifnextchar_{%
    \mint@sup@sub{#1}{#2}{#3}%
  }{%
    \mint@{#1}{#2}{}{#3}%
  }%
}
\def\mint@sub@sup#1#2#3^#4{%
  \mint@{#1}{#2}{#3}{#4}%
}
\def\mint@sup@sub#1#2#3_#4{%
  \mint@{#1}{#2}{#4}{#3}%
}
\newcommand*{\mint@}[4]{%
  \mathop{}%
  \mkern-\thinmuskip
  \mathchoice{%
    \mint@@{#1}{#2}{#3}{#4}%
        \displaystyle\textstyle\scriptstyle
  }{%
    \mint@@{#1}{#2}{#3}{#4}%
        \textstyle\scriptstyle\scriptstyle
  }{%
    \mint@@{#1}{#2}{#3}{#4}%
        \scriptstyle\scriptscriptstyle\scriptscriptstyle
  }{%
    \mint@@{#1}{#2}{#3}{#4}%
        \scriptscriptstyle\scriptscriptstyle\scriptscriptstyle
  }%
  \mkern-\thinmuskip
  \int#1%
  \ifx\\#3\\\else_{#3}\fi
  \ifx\\#4\\\else^{#4}\fi
}
\newcommand*{\mint@@}[7]{%
  \begingroup
    \sbox0{$#5\int\m@th$}%
    \sbox2{$#5\int_{}\m@th$}%
    \dimen2=\wd0 %
    \let\mint@limits=#1\relax
    \ifx\mint@limits\relax
      \sbox4{$#5\int_{\kern1sp}^{\kern1sp}\m@th$}%
      \ifdim\wd4>\wd2 %
        \let\mint@limits=\nolimits
      \else
        \let\mint@limits=\limits
      \fi
    \fi
    \ifx\mint@limits\displaylimits
      \ifx#5\displaystyle
        \let\mint@limits=\limits
      \fi
    \fi
    \ifx\mint@limits\limits
      \sbox0{$#7#3\m@th$}%
      \sbox2{$#7#4\m@th$}%
      \ifdim\wd0>\dimen2 %
        \dimen2=\wd0 %
      \fi
      \ifdim\wd2>\dimen2 %
        \dimen2=\wd2 %
      \fi
    \fi
    \rlap{%
      $#5%
        \vcenter{%
          \hbox to\dimen2{%
            \hss
            $#6{#2}\m@th$%
            \hss
          }%
        }%
      $%
    }%
  \endgroup
}
\def\rr{{\mathbb R}}
\def\rn{{{\rr}^n}}
\def\nn{{\mathbb N}}
\def\fz{\infty}
\def\dist{{\mathop\mathrm{\,dist\,}}}
\def\loc{{\mathop\mathrm{\,loc\,}}}
\def\dz{\delta}
\def\bdz{\Delta}
\def\bz{\beta}
\def\gz{{\gamma}}
\def\wz{\widetilde}
\def\bint{{\ifinner\rlap{\bf\kern.35em--}
\int\else\rlap{\bf\kern.45em--}\int\fi}\ignorespaces}
\def\bbint{{\ifinner\rlap{\bf\kern.35em--}
\hspace{0.078cm}\int\else\rlap{\bf\kern.45em--}\int\fi}\ignorespaces}
\def\r{\right}
\def\lf{\left}
\newtheorem{thm}{Theorem}[section]
\newtheorem{lem}[thm]{Lemma}
\newtheorem{prop}[thm]{Proposition}
\newtheorem{rem}[thm]{Remark}
\newtheorem{cor}[thm]{Corollary}
\numberwithin{equation}{section}
\title
{\Large\bf  Optimal regularity \& Liouville property
for stable solutions to semilinear elliptic equations in $\mathbb R^n$ with  $n\ge10$

\footnotetext{\hspace{-0.35cm}\noindent{2020 {\it Mathematics Subject Classification:}\ 35J61}\endgraf
{\it Key words and phases:}  semilienar elliptic equation, stable solution, BMO regularity, Morry regularity\endgraf
The second author is partially funded by the Chinese Academy of Science and NSFC grant No. 11688101.\endgraf The first and third authors are supported by the National Natural Science Foundation of China (No. 11871088 \& No.12025102).\endgraf $^\ast$ Corresponding author.}
}
\author{Fa Peng, Yi Ru-Ya Zhang$^\ast$ and Yuan Zhou
}
\begin{document}

\arraycolsep=1pt
\allowdisplaybreaks
 \maketitle

\begin{center}
\begin{minipage}{13.5cm}\small
 \noindent{\bf Abstract.}\quad Let $ 0\le f\in C^{0,1}(\rr)$. 
Given a  domain $\Omega\subset \mathbb R^n$,  we prove that any stable solution to the equation $-\Delta u=f(u)$ in $\Omega$ satisfies
\begin{itemize}
\item a BMO interior regularity when  $n=10$,

\item an Morrey $M^{p_n,4+2/(p_n-2)}$ interior  regularity when $n\ge11$, where $$p_n=\frac{2(n-2\sqrt{n-1}-2)}{n-2\sqrt{n-1}-4}. $$
 \end{itemize}
This result is optimal as hinted by e.g. \cite{bv, CC2006, d11}, and answers an open question raised by Cabr\'e, Figalli, Ros-Oton and Serra \cite{cf}. As an application, we show a sharp Liouville property: Any stable solution $u \in C^2(\rn)$ to  $-\Delta u=f(u)$ in $\rn$ satisfying the growth condition
 \begin{equation*}|u(x)|=\lf\{\begin{array}{ll}
o\left( \log|x|  \right) \quad&{\rm as}\ |x|\to+\fz,\quad   \quad{\rm when}\quad n=10;\\
 o\left( |x| ^{ -\frac n2+\sqrt{n-1}+2  }\right) \quad &{\rm as}\ |x|\to+\fz, \quad  \quad{\rm when}\quad n\ge11 
\end{array}\r.
\end{equation*} 
must be a constant. This extends the well-known Liouville property for radial stable solutions  obtained by Villegas \cite{v07}.
\end{minipage}
\end{center}

\section{Introduction}

\par\indent

Let  $\Omega$ be a bounded domain of $\rr^n$ with $n\ge2$.
Given any  local Lipscitz function $f\colon \rr\to \mathbb R$  (for short $f\in C^{0,1}(\rr)$), we consider the semilinear elliptic equation
\begin{equation}\label{st-eq}
-\bdz u=f(u)\quad{\rm in}\quad \Omega,
\end{equation}
which  is the Euler-Lagrange equation for the energy functional
\begin{equation}\label{en}
{\mathcal E}(u):=\int_{\Omega}\left(\frac12|Du|^2-F(u)\right)\,dx,
\end{equation}
where $F(t)=\int_0^tf(s)\,ds$ for $t\in \rr$.
A function  $u\in W^{1,2}(\Omega)$ is called as a weak solution to the equation \eqref{st-eq} if
  $f(u)\in L^1_\loc(\Omega)$ and
$$\int_\Omega Du\cdot D\xi\,dx=\int_\Omega f(u)\xi\,dx\quad\forall \xi\in C_c^\fz(\Omega),$$
that is,  $u$ is a critical point of the energy functional ${\mathcal E}$.
We say that a weak solution $u$ 
 is \emph{stable} in $\Omega$ if
$f'_-(u)\in L^1_\loc(\Omega)$ and
\begin{equation}\label{st-in}
\int_{\Omega}f'_-(u)\xi^2\,dx\le \int_{\Omega}|D\xi|^2\,dx\quad \forall \xi\in
C^\fz_c(\Omega),
\end{equation}
that is, the second variation of the energy functional $\mathcal E $ is nonnegative.
Here and below,
$$f'_-(t)=\liminf_{h\to 0}\frac{f(t+h)-f(t)}{h}\quad\forall t\in\rr,$$
and note that $f'_-(t)=f'(t)$  whenever $f \in C^1(\rr)$.


The study of stable solutions to semilinear elliptic equations can be traced back to the seminal paper  \cite{cr75} by   Crandall and Rabinowitz
in 1975.   The regularity of   stable solutions    provides
an important way to understand the regularity of extremal solution $u^{\star}$ to the Gelfand-type problem
\begin{align}\label{ext}
\left\{
\begin{aligned}
&-\bdz u=\lambda^{\star} f(u)&{\rm in}\ &\Omega\\
&\quad u>0&{\rm in}\ &\Omega\\
&\quad u=0&{\rm on}\ &\partial \Omega
\end{aligned}
\right.
\end{align}
for some positive constant $\lambda^{\star}>0$. We refer to  \cite{b03,c17,g63} and the reference therein for a comprehensive analysis of \eqref{ext} and related topics.
Note that the extremal solution $u^\star$ can be
can be approximated by stable solutions $\{u_\lambda\}_{\lambda<\lambda^{\star}}$, see
e.g.\ \cite{d11}.

In dimension $n\le 9$,
 Brezis \cite{b03} asked an open problem: whether the extremal solution $u^\star$ to equation \eqref{ext} is bounded for some $f$ and $\Omega$.
Since $u^\star$ is approximated by stable soluitons $\{u_\lambda\}_{\lambda<\lambda^{\star}}$, it suffices to   estbalish  some apriori bound for  stable solutions.
 In  recent years, there were several strong efforts to study regularity  for
stable solutions  and  hence for Brezis' open problem.
In particular, a positive answer   was given
by Nedev \cite{n20} when $n\le 3$ and by Cabr\'e \cite{c10} when $n=4$ (see also \cite{c19} for an alternative proof).

Very recently, Cabr\'e, Figalli, Ros-Oton and
Serra \cite{cf} provide a complete answer to Brezis'  open problem  when $f\ge0$ based on certain Morrey-type estimate for $n\ge 3$.
Throughout this paper, for $p\in[1,\fz)$ and $\beta\in(0,n)$, we  define the Morrey  norm  
\begin{equation}\label{b-m}
\quad \|w\| _{M^{p,\bz}(\Omega)}:=\sup_{y\in \Omega,r>0}\left(r^{\bz-n}\int_{\Omega\cap B_r(y)}|w|^p\,dx\right)^{1/p}<\fz,
\end{equation}
where $B_r(y) $ denotes the ball with center $y$ and radius $r>0$.
We simply write $B_r$ when the center of the ball is at the origin.
In addition, following the convention, we denote by $C(a,b,\cdots)$   a positive constant depending  only on the parameters $a$, $b$, $\cdots$.

In dimension $n\ge10$, in particular, Cabr\'e, Figalli, Ros-Oton and Serra \cite[Theorem 1.9]{cf}  established the following regularity
of stable solutions to the equation \eqref{st-eq}.

\begin{thm} [\cite{cf}]\label{cf}
Suppose that  $ f\in C^{0,1}(\rr)$ is  nonnegative.   If    $u\in C^2(B_1)$ is a stable solution to \eqref{st-eq} in $B_1$,
then
\begin{align}\label{es-cf}
\|u\|_{M^{p,2+\frac{4}{p-2}}{(B_{1/2})}}\le C(n,p)\|u\|_{L^1(B_1)}\quad
{\rm for\ every}\quad p<p_n.
\end{align}
where
\begin{align}\label{inde}
p_n:=\left\{
\begin{aligned}
&\fz\quad&{\rm if}\ n=10,\\
&\frac{2(n-2\sqrt{n-1}-2)}{n-2\sqrt{n-1}-4}\quad&{\rm if}\ n\ge11.
\end{aligned}
\right.
\end{align}

Moreover,  suppose additionally that $f $ is  nondecreasing,  and
$\Omega$ be a bound domain of class $C^3$.
If   $u\in C^2(\Omega)\cap C^0(\overline \Omega)$ is a stable solution
to \eqref{st-eq} in $\Omega$ with boundary $u=0$ on $\partial \Omega$, then
\begin{equation}\label{es-cf1}
\|u\|_{M^{p,2+\frac{4}{p-2}}{(\Omega)}}\le C(n,p,\Omega)\|u\|_{L^1(\Omega)}\quad
{\rm for\ every}\quad p<p_n.
\end{equation}
\end{thm}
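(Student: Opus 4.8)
I would follow the approach of Cabr\'e--Figalli--Ros-Oton--Serra \cite{cf}, whose engine is one scale-invariant weighted energy bound for $|Du|$ squeezed out of the stability inequality; everything else is soft.

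\emph{Step 1: reductions.} By a by-now-standard approximation — truncate $f$ to $f_k\nearrow f$ in $C^{0,1}$, mollify, solve, and pass to the limit, both \eqref{st-in} and the $L^1$ bound surviving — it suffices to prove the a priori estimate for $u\in C^\infty(\overline{B_1})$, $f\in C^\infty$, with constants independent of higher norms of $u$. Using $f\ge0$, $u$ is superharmonic, so $\|u\|_{L^1(B_1)}$ controls $u$ from below on $B_{3/4}$ and, via standard $L^1$-elliptic estimates plus an auxiliary use of stability, also $\|f(u)\|_{L^1(B_{3/4})}$ and $\|Du\|_{L^2(B_{3/4})}$; after scaling I normalise $\|u\|_{L^1(B_1)}=1$. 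As \eqref{es-cf} is uniform in the ball's centre, by translation it is enough to argue near a fixed point. For \eqref{es-cf1} one runs the interior argument in the interior and adds a boundary version: flatten $\partial\Omega$ ($C^3$), use $f$ nondecreasing to keep a stability/supersolution structure and the barrier $0\le u\le C\,\dist(\cdot,\partial\Omega)$, and transport the estimate. I treat the interior case.

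\emph{Step 2: geometric stability.} Plug $\xi=|Du|\,\eta$, $\eta\in C^1_c(B_1)$, into \eqref{st-in} — legitimate after restricting to $\{|Du|>\ez\}$ and letting $\ez\to0$ (on a component where $Du\equiv0$, $u$ is constant). With the Bochner identity $\tfrac12\Delta|Du|^2=|D^2u|^2-f'(u)|Du|^2$ and the pointwise identity, valid on $\{Du\ne0\}$,
\[
|D^2u|^2-\big|D|Du|\big|^2=|Du|^2\,|A|^2+\big|D_T|Du|\big|^2,
\]
where $A$ is the second fundamental form of the level set through the point and $D_T$ the tangential gradient, one reaches the Sternberg--Zumbrun inequality
\[
\int_{B_1}\Big(|Du|^2\,|A|^2+\big|D_T|Du|\big|^2\Big)\eta^2\,dx\ \le\ \int_{B_1}|Du|^2\,|D\eta|^2\,dx,\qquad \eta\in C^1_c(B_1).
\]

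\emph{Step 3: the key estimate and the main obstacle.} The aim is, uniformly over $y\in B_{1/2}$,
\[
\int_{B_{1/4}(y)}|Du|^2\,|x-y|^{-2a}\,dx\ \le\ C(n,a)\qquad\text{for every }a<1+\sqrt{n-1},
\]
equivalently $\int_{B_r(y)}|Du|^2\,dx\le C\,r^{2+2\sqrt{n-1}}$ up to an $\ez$. I would test the Sternberg--Zumbrun inequality with $\eta(x)=\big(\min(|x-y|,\delta)\big)^{-a}\,\zeta(x)$, $\delta\to0$, where $\zeta$ is a cutoff equal to $1$ on $B_{1/4}(y)$ and supported in $B_{3/8}(y)$; the right-hand side then yields the critical term $a^2\int|Du|^2|x-y|^{-2a-2}\zeta^2$ plus lower-order terms living where $D\zeta\ne0$, controlled by $\|Du\|_{L^2(B_{3/4})}$. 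Since the left-hand side only sees the curvature and the tangential Hessian, $\int|Du|^2|x-y|^{-2a-2}$ must be \emph{re-absorbed}: for this I would pair the above with the Rellich--Pohozaev identity for $-\Delta u=f(u)$ with multiplier $|x-y|^{-2a}\big((x-y)\cdot Du\big)$, which — crucially because $f\ge0$ and $u\ge-C$ on $B_{3/4}$ — trades the full weighted gradient against the weighted radial derivative and a favourably-signed potential term. Closing the loop forces an algebraic balance of the constants: the re-absorbed coefficient stays positive exactly when $a$ lies below the larger root of $(a-1)^2=n-1$, i.e.\ $a<1+\sqrt{n-1}$ — precisely the threshold behind \eqref{inde}. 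A dyadic summation then gives $\int_{B_r(y)}|Du|^2\le Cr^{2a}$ for every such $a$. The genuinely hard part is all of this step: making the re-absorption truly close (not merely estimate) demands the right interplay of the geometric inequality, the Pohozaev identity and a Hardy-type inequality, and — since $p_n$ is sharp — every constant must be kept exact so the weight range reaches $1+\sqrt{n-1}$; any lossy step degrades the exponent. A secondary subtlety is to run all estimates without using regularity of $u$ not preserved under the Step-1 approximation.

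\emph{Step 4: conclusion.} From $\int_{B_r(y)}|Du|^2\le Cr^{2+2\sqrt{n-1}}$ the theorem follows. For $n=10$ the exponent is $n-2$, so Poincar\'e gives $|B_r|^{-1}\int_{B_r(y)}|u-u_{B_r(y)}|\le Cr\big(|B_r|^{-1}\int_{B_r(y)}|Du|^2\big)^{1/2}\le C$, i.e.\ $u\in\mathrm{BMO}(B_{1/2})$, which yields \eqref{es-cf} for every finite $p$ ($p_n=\fz$). For $n\ge11$ the bound says $|Du|\in M^{2,\,n-2-2\sqrt{n-1}}(B_{1/2})$; the Morrey--Sobolev embedding ($\tfrac1{q^*}=\tfrac12-\tfrac1\lambda$, $\lambda=n-2-2\sqrt{n-1}$) then gives $u\in M^{q^*,\lambda}$ with $q^*=p_n$ and $\lambda=2+\tfrac4{p_n-2}$, and the range $p<p_n$ in \eqref{es-cf} follows by H\"older's inequality. (For $n\le9$ the exponent $2+2\sqrt{n-1}$ exceeds $n-2$, giving $u\in C^{0,\az}$ for some $\az>0$ and, after a routine bootstrap, $u\in L^\infty$ — the case not relevant here.)
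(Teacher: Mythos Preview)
First, note that the paper does not supply its own proof of this statement: Theorem~\ref{cf} is quoted from \cite[Theorem~1.9]{cf}, and the paper's contribution is the endpoint $p=p_n$ in Theorem~\ref{th}. Still, the engine behind both is the same --- Lemma~\ref{key-lin} here, which is \cite[Lemma~2.1]{cf} --- so one can compare your sketch against that.

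Your Steps~1 and~4 are essentially right (in Step~4 you slip into writing the endpoint decay $r^{2+2\sqrt{n-1}}$ although Step~3 only delivers $r^{2a}$ with $a<1+\sqrt{n-1}$; harmless for the strict range $p<p_n$). The substantive divergence is in Steps~2--3. You test stability with $\xi=|Du|\,\eta$ (Sternberg--Zumbrun) and then propose to close a weighted gradient estimate by pairing with a \emph{separate} weighted Pohozaev identity and ``re-absorbing''. The route actually taken in \cite{cf}, and recorded in this paper as Lemma~\ref{key-lin}, is different and far more direct: one tests stability with $\xi=(x\cdot Du)\,\eta$. Expanding and using the equation once yields
\[
\int |x\cdot Du|^2|D\eta|^2\ \ge\ (n-2)\int |Du|^2\eta^2 + 2\int |Du|^2(x\cdot D\eta)\eta - 4\int (x\cdot Du)(Du\cdot D\eta)\eta,
\]
already with the correct radial structure; plugging $\eta=|x|^{-a/2}\phi$ and using only $|x\cdot Du|\le|x|\,|Du|$ closes the inequality whenever $n-2+a-\tfrac{a^2}{4}\ge0$, i.e.\ $a\le 2(1+\sqrt{n-1})$ (this is exactly the computation in the proof of Lemma~\ref{key-le1}). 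No re-absorption, no Hardy inequality, no sign assumption on $f$, and no lower barrier for $u$ are needed at this stage.

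By contrast, your Step~3 is a plan rather than an argument, and the plan has a real gap. The Sternberg--Zumbrun left-hand side controls only the level-set curvature term $|A|^2|Du|^2$ and the tangential piece $|D_T|Du||^2$; neither appears in a weighted Pohozaev identity, so there is no evident mechanism by which pairing the two lets you absorb $a^2\int|Du|^2|x|^{-2a-2}$ and still land on the sharp threshold $a=1+\sqrt{n-1}$. You acknowledge this (``the genuinely hard part is all of this step''), and you invoke $f\ge0$ and a lower bound on $u$ to sign a potential term --- ingredients that the actual proof does not need for the weighted $L^2$ decay. In short: the target inequality and the embedding step are correct, but the mechanism you propose to reach the weighted energy decay is not the one in \cite{cf}/this paper, and as written it does not close. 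The single test function $\xi=(x\cdot Du)\eta$ is precisely the idea your sketch is missing.
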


We remark that the exponent 
$n-2\sqrt{n-1}-4$ changes sign when $n=10$, which was already appeared in e.g.\ \cite{gnw}. 

However for the endpoint case $p=p_n$, Cabr\'e, Figalli, Ros-Oton and Serra \cite[Section 1.3]{cf} pointed out  that
it is an open question whether \eqref{es-cf} holds.

%

As hinted by earlier results \cite{CC2006} in the radial symmetric case, when $n=10$,   instead of $L^\infty=M^{\fz,2}$  a more suitable space to consider is class of functions with  bounded mean oscillations (BMO space) as  remarked therein.
Indeed,
  $u(x)=-2\log|x|$ is a stable solution to the equation \eqref{st-eq} in $B_1$ with  $f(u)=2(n-2)e^u$.
Obviously,
$u\in {\rm BMO}(B_1)$ but $u\notin L^\fz(B_1)$.  Here and below,
the {\rm BMO} norm is defined as
$$\|u\|_{{\rm BMO}(\Omega)}:=\sup_{y\in \Omega,r>0} \inf_{c\in\rr}\mint{-}_{\Omega\cap B_r(y)}
\left|u(x)-c\right|\,dx,$$
where, $\mint-_Ev\,dx$ denotes the integral average of $v$ on a measurable set $E$.

On the other hand, when $n\ge11$, also hinted by the results in \cite{CC2006}, in  the range $p\le p_n$ is the best possible to get \eqref{es-cf}.   Besides, it was proven in  \cite{bv}   that the function
$u(x)=|x|^{-\frac{2}{q_n-1}}-1$ is the extremal solution to
\begin{align}\label{ex}
\begin{aligned}
&-\bdz u=\lambda^{\star}(1+u)^{q_n}\quad{\rm in}\ B_1;\quad u=0\quad\quad {\rm on}\ \partial B_1
\end{aligned}
\end{align}
with $\lambda^{\star}=\frac{2}{q_n}$ and $q_n:=\frac{n-2\sqrt{n-1}}{n-2\sqrt{n-1}-4}$.
  It is easy to see that
$u\in M^{p,2+\frac{4}{p-2}}(B_{1/2})$ if and only if $p\le p_n$. Recall  that by \cite[Section 3.2.2]{d11},  such extremal solution    can be approximated by
stable solutions.


The first main purpose of this paper is to establish the following regularity at the end-point $p_n$
for stable solutions to the equation \eqref{st-eq} when $n\ge10$, and then  answers the above open question
 by Cabr\'e, Figalli, Ros-Oton and Serra \cite{cf}.
\begin{thm}\label{th}
Suppose $f\in C^{0,1}(\rr)$ is  nonnegative.
For any  stable solution $u\in C^2(B_1)$
to \eqref{st-eq} in $B_1$,  when $n=10$ we have
 \begin{align}\label{tho-1}
\|u\|_{{\rm BMO}(B_{1/2})}\le C(n)\|u\|_{L^1(B_1)},
\end{align}
and when $n\ge 11$ we have
\begin{align}\label{tho-2}
 \|u\|_{M^{p_n,2+\frac{4}{p_n-2}}{(B_{1/2})}}\le C(n) \|u\|_{L^1(B_1)}.
 \end{align}

 Moreover,  suppose additionally that $f $ is  nondecreasing,  and
$\Omega$ is a bounded smooth   convex domain.
For any positive  stable solution
 $u\in C^2(\overline \Omega)$  to \eqref{st-eq} with boundary
 $u=0$ on $\partial \Omega$,   when $n=10$ we have
 \begin{align}\label{tho-3}
\|u\|_{{\rm BMO}(\Omega)}\le C(n,\Omega)\|u\|_{L^1(\Omega)},
\end{align}
and when $n\ge 11$ we have
 \begin{align}\label{tho-4}
 \|u\|_{M^{p_n,2+\frac{4}{p_n-2}}{(\Omega)}}\le C(n,\Omega)\|u\|_{L^1(\Omega)}.
 \end{align}
 \end{thm}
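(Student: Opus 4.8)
The plan is to close the gap at the endpoint exponent $p_n$ by a more careful analysis than in \cite{cf}. The starting point is the classical consequence of stability applied to the test function $\xi=u_e\varphi$ (with $u_e$ a directional derivative and $\varphi$ a cutoff), together with the geometric stability identity of Sternberg--Zumbrun / Cabr\'e, which produces the differential inequality controlling the tangential gradient $|\nabla_T|Du||$ by $|D^2u|$ on level sets. Combined with the equation $-\Delta u=f(u)\ge0$, this yields, for the superlevel sets or via the standard bootstrapping of \cite{cf}, a reverse-type estimate that gives decay of $r^{2-n}\int_{B_r}|Du|^2$. The crux is that in \cite{cf} the Morrey exponent $p$ was forced to stay strictly below $p_n$ because an $\varepsilon$ of room was spent at each iteration of a Moser/De Giorgi or a dyadic-summation step; my plan is to replace that lossy iteration by a single scale-invariant estimate that is sharp at $p=p_n$.

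Concretely, I would first prove the interior estimate on balls (statements \eqref{tho-1} and \eqref{tho-2}). Set up the quantity $\phi(r):=r^{2-n}\int_{B_r}|Du|^2\,dx$ (or its weighted analogue $r^{-\beta_n}\int_{B_r}|u|^{p_n}$ with $\beta_n=2+\tfrac{4}{p_n-2}$). Using stability exactly as in \cite{cf} one gets a monotonicity-type differential inequality of the form $\phi'(r)\ge -\tfrac{C}{r}\,\Theta(r)$ with a remainder $\Theta$ that in the subcritical regime was absorbed crudely. I would instead track the exact constant: the eigenvalue computation on the sphere $S^{n-1}$, which is what produces the number $n-2\sqrt{n-1}$, shows that the natural Hardy inequality on the cone has best constant exactly matching $p_n$, so the differential inequality becomes $\phi'(r)\ge -\tfrac{C}{r}\,\phi(r)^{1+\eta}$ or, at the endpoint, a borderline logarithmic ODE. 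When $n=10$ the borderline case integrates to give a logarithmic modulus of continuity for $\int_{B_r}|Du|^2$, which is precisely a John--Nirenberg/BMO bound; when $n\ge 11$ it integrates to the clean power $r^{\beta_n}$, i.e.\ the Morrey bound at $p_n$. Passing from the $Du$-estimate to the $u$-estimate is done by a Poincar\'e inequality on each ball $B_r$, which converts an $L^2$-gradient Morrey bound into a BMO bound (for $n=10$) or an $M^{p_n,\beta_n}$ bound on $u$ itself (for $n\ge11$), after a standard covering and a final use of $\|u\|_{L^1(B_1)}$ to normalize.

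For the global statements \eqref{tho-3} and \eqref{tho-4} under the extra hypotheses ($f$ nondecreasing, $\Omega$ bounded smooth and convex, $u>0$, $u=0$ on $\partial\Omega$), the strategy is to reduce to the interior case near the boundary by the moving-plane / sliding method, which under convexity gives that $u$ and $|Du|$ are controlled near $\partial\Omega$ by their values on an interior layer $\{x:\dist(x,\partial\Omega)\ge c\}$; this is exactly the mechanism used in \cite{cf} for the boundary version of Theorem~\ref{cf}, and convexity is used only here. Then one covers $\overline\Omega$ by interior balls of \cite{cf}-type on which \eqref{tho-1}/\eqref{tho-2} apply, plus a boundary neighborhood handled by the reflection, and sums the Morrey/BMO contributions; the constant now also depends on $\Omega$ through the $C^3$-character and the convexity moduli. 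The main obstacle I expect is the endpoint differential-inequality step: in the subcritical range one has a genuine power gain that makes the ODE trivial to integrate, but at $p=p_n$ one sits exactly on the threshold where the Hardy constant on the cone is saturated, so the argument must be done with the sharp constant and no slack — this is where the BMO phenomenon at $n=10$ (logarithmic rather than H\"older/Lipschitz growth) genuinely appears, and getting the iteration to close without losing that borderline requires the precise spherical-harmonic computation rather than a soft absorption.
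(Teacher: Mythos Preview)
Your overall architecture matches the paper's: first prove a sharp Morrey decay for $|Du|^2$ at the exact exponent $\beta=n-2-2\sqrt{n-1}$, then pass to $u$ by a Poincar\'e/embedding step (Sobolev--Poincar\'e when $n=10$, $\beta=2$, giving BMO; Adams' $M^{2,\beta}\hookrightarrow M^{2\beta/(\beta-2),\beta}$ when $n\ge11$), and for the global statements use the moving-plane $L^\infty$ bound near $\partial\Omega$ plus an interior cover. That part is fine.

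The gap is in the engine room. You propose to obtain the endpoint gradient decay by setting up $\phi(r)=r^{2-n}\int_{B_r}|Du|^2$, deriving a differential inequality via the Sternberg--Zumbrun test $\xi=u_e\varphi$, and then integrating ``a borderline logarithmic ODE'' at $n=10$. Two problems. First, no logarithm actually appears anywhere: what one needs (and what the paper proves) is the clean scale-invariant inequality
\[
r^{-2(1+\sqrt{n-1})}\int_{B_r}|Du|^2\ \le\ C(n)\,t^{-2(1+\sqrt{n-1})}\int_{B_t\setminus B_{t/2}}|Du|^2,\qquad 0<r\le \tfrac t2,
\]
which for $n=10$ simply says $\phi(r)$ is bounded (and then Poincar\'e gives BMO directly), not that it solves a logarithmic ODE. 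Second, and more importantly, you do not say how to get this inequality with the \emph{exact} constant; ``track the sharp Hardy/spherical-harmonic constant'' is the goal, not a method. The Sternberg--Zumbrun identity controls $|\nabla_T|Du||$ and second fundamental form terms, but it is not clear how you would convert that into the precise radial decay with no $\varepsilon$-loss.

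The paper's mechanism is different and more direct: it uses the Cabr\'e--Figalli--Ros-Oton--Serra inequality coming from the test function $\xi=(x\cdot Du)\,\eta$ in stability (their Lemma~2.1), and plugs in the specific radial cutoff
\[
\eta(x)=\begin{cases} r^{-a/2},& |x|\le r,\\ |x|^{-a/2}\phi(x),& r<|x|\le 1,\end{cases}\qquad a=2(1+\sqrt{n-1}),
\]
with $\phi\in C_c^\infty(B_1)$, $\phi\equiv1$ on $B_{3/4}$. The algebra then reduces to the identity $n-2+a-\tfrac{a^2}{4}=0$ (and $2a-\tfrac{a^2}{4}\le0$ for $n\ge10$), which kills the interior bulk term exactly and leaves only an annular remainder --- this is what closes the argument at the endpoint without any iteration or ODE. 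The point the paper stresses (see their Remark after the proof of the key lemma) is precisely that the test function $|x|^{-a/2}\xi$ used in \cite{cf} is \emph{not} sharp enough; the flat cap $r^{-a/2}$ on $B_r$ is the new idea. Your proposal does not supply any substitute for this step.
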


As a direct consequence of the above a priori estimates, we have the following result for stable solution in $W^{1,2}$.
\begin{cor}\label{cor}
Suppose that   $\Omega\subset \mathbb R^n$ is a bounded smooth convex domain and
 that $f\in C^{0,1}(\rr)$ is nonnegative, nondecreasing, convex, and satisfies
$ f(t)/t \to+\fz$ as $t\to+\fz$.
For any stable solution $u\in W^{1,2}_0(\Omega)$   to \eqref{st-eq} with boundary $u=0$ on $\partial \Omega$,
 we have \eqref{tho-3} when $n=10$, and \eqref{tho-4}  when $n\ge11$.
\end{cor}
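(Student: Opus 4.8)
The plan is to deduce Corollary \ref{cor} from the a priori estimate of Theorem \ref{th} by approximating the (possibly unbounded) stable $W^{1,2}_0$-solution $u$ by classical stable solutions of rescaled equations, exploiting that the constant in Theorem \ref{th} does not depend on the nonlinearity. First I would record the elementary reductions: since $0\le f$, the solution $u$ is superharmonic and vanishes on $\partial\Omega$, so $u\ge0$ in $\Omega$ by the maximum principle, and we may assume $u\not\equiv0$ (otherwise \eqref{tho-3} and \eqref{tho-4} hold trivially), so that in fact $u>0$ in $\Omega$. Testing $-\Delta u=f(u)$ against the first Dirichlet eigenfunction of $\Omega$ shows $f(u)\,\dist(\cdot,\partial\Omega)\in L^1(\Omega)$, so $u$ is a weak solution of the Gelfand-type problem $-\Delta v=\lambda f(v)$ in $\Omega$, $v=0$ on $\partial\Omega$, at parameter $\lambda=1$.

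Next I would invoke the classical theory of this problem under the present hypotheses on $f$ --- nonnegative, nondecreasing, convex, and superlinear at $+\infty$ --- as developed in \cite{b03,d11}. There is a finite extremal parameter $\lambda^\star\in(0,\infty)$; since no weak solution exists for $\lambda>\lambda^\star$, the existence of $u$ forces $1\le\lambda^\star$, and for each $\lambda\in(0,1)$ there is a bounded minimal classical solution $u_\lambda\ge0$, which is stable and satisfies $u_\lambda\le u$ (because, using $f\ge0$ and $\lambda\le1$, $u$ is a supersolution of the $\lambda$-problem). Moreover $\lambda\mapsto u_\lambda$ is nondecreasing and, by a standard energy bound together with the associated compactness and stability arguments, $u_\lambda\uparrow u$ as $\lambda\uparrow1$: the increasing limit $\bar u:=\lim_{\lambda\uparrow1}u_\lambda$ is a stable weak solution at $\lambda=1$ with $\bar u\le u$, and the convexity of $f$ forces uniqueness of such solutions, whence $\bar u=u$.

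Now I would apply Theorem \ref{th} uniformly along the branch. For each $\lambda\in(0,1)$, the bounded function $u_\lambda$ solves $-\Delta u_\lambda=g_\lambda(u_\lambda)$ with $g_\lambda:=\lambda f\in C^{0,1}(\rr)$ nonnegative and nondecreasing; the right-hand side is then H\"older continuous, so $u_\lambda\in C^2(\overline\Omega)$ by elliptic regularity (a $W^{2,p}$ bootstrap followed by Schauder estimates up to the smooth boundary), and each nontrivial $u_\lambda$ is positive in $\Omega$ by the strong maximum principle. Hence the boundary part of Theorem \ref{th} applies to $u_\lambda$ (the estimate being trivial when $u_\lambda\equiv0$), and since the constant there depends only on $n$ and $\Omega$ and not on $g_\lambda$, we obtain, uniformly in $\lambda\in(0,1)$, $\|u_\lambda\|_{{\rm BMO}(\Omega)}\le C(n,\Omega)\|u_\lambda\|_{L^1(\Omega)}$ when $n=10$ and $\|u_\lambda\|_{M^{p_n,2+4/(p_n-2)}(\Omega)}\le C(n,\Omega)\|u_\lambda\|_{L^1(\Omega)}$ when $n\ge11$. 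Finally, since $0\le u_\lambda\uparrow u$ a.e.\ and $u\in W^{1,2}_0(\Omega)\subset L^1(\Omega)$, monotone convergence gives $\|u_\lambda\|_{L^1(\Omega)}\to\|u\|_{L^1(\Omega)}$, while the BMO and Morrey seminorms are lower semicontinuous with respect to a.e.\ convergence (ball by ball, by Fatou's lemma); combining these with the uniform bounds above yields \eqref{tho-3} for $n=10$ and \eqref{tho-4} for $n\ge11$.

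The crux is the identification in the second paragraph: one must know that the given $W^{1,2}_0$ stable solution is exactly the monotone limit of the classical minimal branch --- equivalently, that it is bounded and classical when $\lambda^\star>1$, and the extremal solution when $\lambda^\star=1$. This is precisely where the hypotheses that $f$ be convex and superlinear enter, since they underlie both the uniqueness of stable weak solutions and the approximability of the extremal solution by classical stable solutions recorded in \cite{d11}; without them a stable $W^{1,2}_0$-solution need not be reachable from smooth solutions, and Theorem \ref{th} could not be applied directly. The remaining ingredients --- the boundary Schauder regularity of the $u_\lambda$ and the lower semicontinuity of the two seminorms --- are then routine.
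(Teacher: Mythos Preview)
Your proposal is correct and follows essentially the same approach as the paper: approximate $u$ by a monotone sequence of classical stable solutions, apply the a~priori estimate of Theorem~\ref{th} uniformly along the sequence (the constant there being independent of the nonlinearity), and pass to the limit. The only cosmetic difference is that the paper invokes a packaged approximation result from \cite{d11} (see also \cite{cf,df}), which regularizes $f$ to $f_k\in C^1$ and produces stable $u_k\in C^2(\overline\Omega)$ with $u_k\to u$ in $W^{1,2}(\Omega)$, whereas you construct the approximants directly as the minimal branch $u_\lambda$ of the Gelfand problem with $g_\lambda=\lambda f$ and pass to the limit by monotone convergence and Fatou; both routes rest on the same convexity/superlinearity hypotheses to identify $u$ with the limit of the branch.
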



\begin{rem}\label{re}\rm
(i) While  writing this paper, via personal communication
we learn that Figalli and Mayboroda have independently
proved  \eqref{tho-1} in Theorem 1.2 with $n=10$ via a similar argument.

(ii) In Theorem \ref{th} and Corollary \ref{cor}  we only consider  bounded smooth  convex domains so as to avoid  technical discussions on the  boundary estimate. We believe that after suitable modifications, it is possible to relax this assumption  to  bounded domains of $C^3$ class, as in \cite{cf}.
\end{rem}

As an application of Theorem~\ref{th}, we prove the following
 Liouville property for  stable solutions to the equation
\begin{equation}\label{st-eq2}
-\bdz u=f(u)\quad {\rm in}\quad \rr^n
\end{equation}
for $f\in C^{0,1}(\rn)$. 

\begin{thm}\label{lower}
Let $n\ge10$ and $0\le f\in C^{0,1}_{\loc}(\rr)$.
Suppose that $u\in C^2(\rr^n)$ is a nonconstant stable solution to \eqref{st-eq2}
in $\rr^n$.

 If $u$ is nonconstant, then
\begin{equation}\label{g-d2}
\mint{-}_{B_{4R}\setminus B_R}|u(x)|\,dx\ge\lf\{\begin{array}{ll}
c\log R \ &\forall R\ge R_0,\quad{\rm if}\quad n=10,\\
 cR^{-\frac n2+2+\sqrt{n-1} } \  &\forall R\ge R_0,\quad{\rm if}\quad n\ge 11
\end{array}\r.
\end{equation}
for some $R_0\ge 2$ and $c>0$.

In particular, if $u$ satisfies  the growth condition 
\begin{equation}\label{g-d}|u(x)|=\lf\{\begin{array}{ll}
o\left( \log|x|  \right) \quad&{\rm as}\ |x|\to+\fz,\quad   \quad{\rm when}\quad n=10;\\
 o\left( |x| ^{ -\frac n2+2+\sqrt{n-1}  }\right) \quad &{\rm as}\ |x|\to+\fz, \quad  \quad{\rm when}\quad n\ne10,
\end{array}\r.
\end{equation}
then $u$ must be a constant.
 \end{thm}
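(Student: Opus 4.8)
The plan is to derive the Liouville property \eqref{lower} as a direct consequence of the interior estimate \eqref{tho-1}--\eqref{tho-2} of Theorem~\ref{th}, applied on balls of growing radius and then rescaled. Suppose $u\in C^2(\rn)$ is a nonconstant stable solution to \eqref{st-eq2} and, arguing by contradiction, that \eqref{g-d2} fails; then there is a sequence $R_j\to\fz$ along which $\mint{-}_{B_{4R_j}\setminus B_{R_j}}|u|\,dx$ is small compared to the right-hand side. First I would rescale: for $R>0$ set $u_R(x):=u(Rx)$, which is again a stable solution of $-\bdz u_R=R^2 f(u_R)$ in any fixed ball, with $R^2f(\cdot)\in C^{0,1}(\rr)$ still nonnegative, so that Theorem~\ref{th} applies to $u_R$ on $B_1$ (the constant $C(n)$ being scale-invariant in the sense that it does not see the factor $R^2$, since stability \eqref{st-in} and the sign of $f$ are all that enter the proof). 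This yields $\|u_R\|_{{\rm BMO}(B_{1/2})}\le C(n)\|u_R\|_{L^1(B_1)}$ when $n=10$, and the analogous Morrey bound when $n\ge 11$.

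The second step is to convert the $L^1(B_1)$ bound on $u_R$ and the BMO/Morrey bound into a pointwise-in-annulus lower bound for the average of $|u|$. Unraveling the scaling, $\|u_R\|_{L^1(B_1)}=R^{-n}\int_{B_R}|u|\,dx$, and the BMO seminorm of $u_R$ on $B_{1/2}$ equals that of $u$ on $B_{R/2}$. The point is that a nonconstant harmonic-type function with controlled right-hand side cannot have vanishingly small oscillation: more precisely, since $u$ is nonconstant and smooth, $\|u\|_{{\rm BMO}(B_\rho)}$ is bounded below by a positive constant for $\rho$ large (indeed it is nondecreasing in $\rho$ and positive once $\rho$ exceeds some $\rho_0$ where $u$ is not a.e.\ equal to its mean). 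Combining, for $n=10$ one gets $0<c_0\le\|u\|_{{\rm BMO}(B_{R/2})}\le C(n) R^{-n}\int_{B_R}|u|\,dx$, i.e.\ $\int_{B_R}|u|\,dx\ge c\,R^{n}$, which after a dyadic/annular decomposition and the telescoping $\int_{B_{4R}\setminus B_R}|u|\ge\int_{B_{4R}}|u|-\int_{B_R}|u|$ — together with iterating the estimate on the scales $R, 4R, 16R,\dots$ — forces $\mint{-}_{B_{4R}\setminus B_R}|u|\,dx\gtrsim\log R$; the logarithm appears because one must sum the harmonic-type series $\sum 1/k$ over the $\sim\log R$ dyadic scales between $1$ and $R$, reflecting that the extremal profile is $-2\log|x|$. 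For $n\ge 11$ one argues identically with the Morrey norm $\|u\|_{M^{p_n,2+4/(p_n-2)}}$ in place of BMO: the defining supremum in \eqref{b-m} over $r\sim R$ gives $R^{\,2+\frac4{p_n-2}-n}\int_{B_R}|u|^{p_n}\gtrsim 1$, and since $2+\frac4{p_n-2}=\frac n2-\sqrt{n-1}$ by the definition of $p_n$ (this is the same computation recorded after Theorem~\ref{cf}), one extracts $\mint{-}_{B_{4R}\setminus B_R}|u|\ge c R^{-\frac n2+2+\sqrt{n-1}}$, matching the critical profile $|x|^{-2/(q_n-1)}$.

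Once \eqref{g-d2} is established, the final assertion is immediate: if $u$ satisfies the growth hypothesis \eqref{g-d}, then $\mint{-}_{B_{4R}\setminus B_R}|u|\,dx=o(\log R)$ (resp.\ $o(R^{-n/2+2+\sqrt{n-1}})$) as $R\to\fz$, contradicting the lower bound \eqref{g-d2} unless $u$ is constant. One small point to check is that $0\le f\in C^{0,1}_{\loc}(\rr)$ (rather than globally Lipschitz) suffices: since $u\in C^2(\rn)$, on each fixed ball $B_R$ the range of $u$ is bounded, so $f$ restricted to that range is Lipschitz, and after a harmless modification of $f$ outside this compact range one is back in the setting of Theorem~\ref{th}; the resulting constant does not deteriorate because, as noted, only the sign of $f$ and the stability inequality are used.

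The main obstacle I expect is making the rescaling genuinely scale-invariant and extracting the logarithmic (resp.\ power) rate cleanly from the single-scale estimate. The per-scale bound $\int_{B_R}|u|\gtrsim R^n$ is too weak on its own to beat an $o(\log R)$ average over one annulus; the logarithm must be manufactured by summing the estimate over all dyadic scales and carefully telescoping, which requires knowing the lower bound is uniform in the scale — this in turn hinges on the fact that $\|u\|_{{\rm BMO}(B_\rho)}$ (or the Morrey seminorm) stabilizes to a positive value, which uses only that $u$ is nonconstant and $C^2$. Keeping track of the exponent $2+\frac4{p_n-2}=\frac n2-\sqrt{n-1}$ and its relation to $-\frac n2+2+\sqrt{n-1}$ in \eqref{g-d2} is bookkeeping but must be done with care, since a sign error there would flip the growth condition.
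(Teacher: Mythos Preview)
Your approach has a genuine gap, and it is precisely at the step you flag as ``the main obstacle.'' Applying Theorem~\ref{th} after rescaling gives, for $n=10$,
\[
0<c_0\le \|u\|_{{\rm BMO}(B_{R/2})}\le C(n)\,\mint{-}_{B_R}|u|\,dx,
\]
hence only $\mint{-}_{B_R}|u|\,dx\ge c$ for all large $R$. This cannot be upgraded to $\mint{-}_{B_{4R}\setminus B_R}|u|\,dx\gtrsim\log R$ by telescoping: knowing $\int_{B_{4R}}|u|\ge c(4R)^n$ and $\int_{B_R}|u|\ge cR^n$ gives no lower bound on their difference, and summing the single-scale inequality over dyadic scales $R_k=4^k$ just reproduces $c_0\le C$ with no gain. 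A function with $\mint{-}_{B_R}|u|\equiv\mathrm{const}$ satisfies your per-scale estimate yet has annulus averages bounded, not logarithmic; the fact that such a function is not a stable solution is exactly what the argument must exploit, and the black-box use of Theorem~\ref{th} throws this information away. The same difficulty appears for $n\ge11$: even granting a lower bound on $\mint{-}_{B_R}|u|$, there is no general passage to annulus averages (one can build nonnegative functions with $\int_{B_R}|u|\sim R^{n+\alpha}$ and $\int_{B_{4R}\setminus B_R}|u|=0$ along a subsequence). Incidentally, your computation $2+\tfrac{4}{p_n-2}=\tfrac n2-\sqrt{n-1}$ is off; the correct value is $n-2-2\sqrt{n-1}$, and your Morrey-norm argument also reverses an inequality (the defining supremum is an upper bound on each term, not a lower one).

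The paper does not use Theorem~\ref{th} as a black box. It works one level deeper, with the gradient decay of Lemma~\ref{key-le1}, whose key feature is that the right-hand side already lives on an \emph{annulus}: $r^{-2(1+\sqrt{n-1})}\int_{B_r}|Du|^2\le C\int_{A_{t/2,t}}|Du|^2$. Covering $A_{R,2R}$ by balls centred in the annulus and applying Lemma~\ref{L-bou} there yields $\int_{A_{R,2R}}|Du|^2\le C R^{n-2}\bigl(\mint{-}_{A_{3R/4,3R}}|u|\bigr)^2$, so one directly gets
\[
r^{-(1+\sqrt{n-1})}\Bigl(\int_{B_r}|Du|^2\Bigr)^{1/2}\le C\,R^{\frac n2-2-\sqrt{n-1}}\mint{-}_{A_{3R/4,3R}}|u|\,dx.
\]
For $n\ge11$ this already closes. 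For $n=10$ the exponent vanishes, and the logarithm is produced by a genuinely new ingredient: the left side is independent of $R$, so averaging the inequality over $m\sim\log R$ dyadic scales gives a factor $1/m$, and the sum $\sum_j\int_{A_j}|Du||x|^{-n+1}\,dx$ collapses to a single large annulus by bounded overlap. That integral is then controlled, via Proposition~\ref{k-po} (which uses $-\Delta u\ge0$), by $\mint{-}_{A_{1,2}}|u|+\mint{-}_{A_{R^2,2R^2}}|u|$. The superharmonicity is essential here and is invisible at the level of the BMO statement.
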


This problem has  attracted a lot of attention in the literature. First of all, for radial stable solutions,
Villegas \cite{v07} in 2007 obtained the following sharp Liouville property  based on the  monotone  property by Cabr\'e-Capella \cite{cc04}  (see also \cite{v07,d11}).

\begin{thm} [\cite{v07}] \label{rlp} Let $n\ge2$ and $f\in C^1(\rr)$. Suppose that
  $u \in C^2(\rn) $ is a  radial stable  solution to \eqref{st-eq2}.

 If $u$ is not constant,  then
\begin{equation}\label{g-d3} |u(x)|\ge \lf\{\begin{array}{lll}M\log |x| \ \quad & \mbox{whenever} \ |x|\ge r_0, &   {\rm when}\quad n=10, \\
  M|x|^{-\frac n2+\sqrt{n-1}+2} \ \ &\mbox{whenever} \  |x|\ge r_0 , \ &  {\rm when }\quad n\neq 10
\end{array}\r.
\end{equation}
 for some $M>0$ and $r_0\ge 10$.

In particular, if  $u$   satisfies the growth condition \eqref{g-d}, 
   then $u$ must be a constant.
\end{thm}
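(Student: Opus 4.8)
\textbf{Proof plan for Theorem \ref{rlp}.}
The plan is to work entirely within the radial ODE framework and exploit the Cabr\'e--Capella monotonicity formula, which for a radial stable solution $u=u(r)$ of $-\Delta u=f(u)$ on $\rn$ asserts that the quantity
\begin{equation*}
\int_{r}^{R}\frac{(n-1)}{s}\,u_r(s)^2\,ds
\end{equation*}
controls $u_r$ in a precise weighted way; concretely, stability tested against $\xi=|u_r|\,\eta$ with radial cutoffs $\eta$ yields, after integrating by parts and using the Hardy inequality on $\rn$ with sharp constant $(n-2)^2/4$, a differential inequality forcing $s\mapsto s^{\gamma}|u_r(s)|$ to be (essentially) monotone for the exponent $\gamma=\tfrac n2-\sqrt{n-1}-1$. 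This is the only place the dimension enters, and the number $n-2\sqrt{n-1}$ appears exactly because it is twice this $\gamma$ plus something; the sign change at $n=10$ (where $n-2\sqrt{n-1}-4$ vanishes) is precisely the transition between logarithmic and power behavior. First I would record this monotonicity in the clean form: there is $C=C(n)$ so that
\begin{equation*}
s^{\gamma}|u_r(s)|\le C\, t^{\gamma}|u_r(t)|\qquad\text{for all } 0<s\le t,
\end{equation*}
with $\gamma=\tfrac n2-\sqrt{n-1}-1$; this is standard and can be cited from \cite{cc04,v07,d11}.

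Next I would argue a dichotomy for a nonconstant radial solution. If $u_r\equiv 0$ near some radius on a set of positive measure the unique continuation / ODE uniqueness forces $u$ constant, a contradiction; hence $u_r(t_0)\ne0$ for some $t_0>0$. Feeding this into the monotonicity bound gives a lower bound $|u_r(s)|\ge c\, s^{-\gamma}$ on the relevant range, but actually the inequality is used in the other direction: since $s^\gamma|u_r(s)|$ is bounded below away from $0$ for large $s$ (because it is nonincreasing in $s$ once we flip the inequality appropriately, or because the opposite monotonicity statement $t^{\gamma}|u_r(t)|\ge c$ for $t\ge t_0$ holds after reversing roles), we get $|u_r(s)|\ge c\,s^{-\gamma}$ for all $s\ge r_0$. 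Integrating this radial derivative bound from $r_0$ to $|x|$ then produces the stated growth of $|u(x)|$: when $-\gamma>-1$, i.e. $\gamma<1$, which never happens for $n\ge2$ except... — here one must be careful — the integral $\int^{R} s^{-\gamma}\,ds$ behaves like $R^{1-\gamma}$ when $\gamma\ne1$ and like $\log R$ when $\gamma=1$; and $\gamma=1$ is exactly $n/2-\sqrt{n-1}-1=1$, i.e. $n-2\sqrt{n-1}=4$, i.e. $n=10$. For $n\ne10$ one gets $|u(x)|\gtrsim |x|^{1-\gamma}=|x|^{-\frac n2+\sqrt{n-1}+2}$ (if $1-\gamma>0$; if $1-\gamma<0$ one instead integrates from $|x|$ to $\infty$ using that $u_r\to0$ to again land on $|x|^{1-\gamma}$), and for $n=10$ one gets $|u(x)|\gtrsim\log|x|$, which are exactly \eqref{g-d3}. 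The final ``in particular'' assertion is then immediate: the growth hypothesis \eqref{g-d} contradicts \eqref{g-d3} unless $u$ is constant.

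The main obstacle, and the step deserving the most care, is establishing the Cabr\'e--Capella monotonicity with the sharp exponent and handling the sign of $1-\gamma$ uniformly: when $\gamma>1$ (the case $n\ge11$, since then $n-2\sqrt{n-1}>4$) the monotone quantity $s^\gamma|u_r(s)|$ decreasing toward $s=0$ is the useful direction, and one must instead exploit that $|u_r|$ itself is integrable at infinity to recover a lower bound on $|u|$ — this requires knowing $u_r$ does not oscillate in sign infinitely often, which follows from stability (a radial stable solution has $u_r$ of one sign for large $r$, since sign changes would violate the stability inequality by a standard comparison with the first eigenfunction of the linearized operator on an annulus). I would isolate this ``eventual monotonicity of $u$'' as a lemma, prove it from \eqref{st-in} applied on annuli, and then the integration argument above goes through cleanly in both regimes. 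Everything else — the Hardy inequality, the ODE uniqueness giving the nonconstant dichotomy, and the elementary integration $\int s^{-\gamma}ds$ — is routine.
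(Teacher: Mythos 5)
First, a point of reference: the paper does not prove Theorem \ref{rlp} at all --- it is quoted from Villegas \cite{v07}, and the only thing the paper itself establishes about it is the equivalence of \eqref{g-d2} and \eqref{g-d3} for radial solutions, using the constant sign of $u_r$ (again cited from \cite{v07}). Note also that Theorem \ref{rlp} holds for all $n\ge2$ and sign-changing $f\in C^1$, so it is not a special case of the paper's Theorem \ref{lower}. Your overall architecture --- constant sign of $u_r$, a pointwise lower bound $|u_r(s)|\ge c\,s^{-\gamma}$ with $\gamma=\frac n2-\sqrt{n-1}-1$, then integration from $r_0$ outward when $\gamma\le1$ and from $|x|$ toward the limit at infinity when $\gamma>1$ --- is indeed the right skeleton of Villegas' argument.

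The genuine gap is in the central lemma, which you state in a form you are visibly unsure of and attribute to the wrong mechanism. The exponent $\sqrt{n-1}$ does not come from the Hardy inequality with constant $(n-2)^2/4$ (that would produce $(n-2)/2$ in place of $\sqrt{n-1}$; Hardy enters this paper only in the Appendix, to verify stability of an explicit example). It comes from the fact that $v=u_r$ solves the linearized equation with the extra potential $\frac{n-1}{|x|^2}$, so that testing \eqref{st-in} with $\xi=u_r\eta$ yields $\int u_r^2\bigl(|D\eta|^2-\frac{n-1}{|x|^2}\eta^2\bigr)\,dx\ge0$, and the choice $\eta\sim|x|^{-\sqrt{n-1}}$ (truncated) kills the bulk term. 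Even granting this, what stability gives directly is an annulus-averaged lower bound on $\int u_r^2$ against the weight $|x|^{-2\sqrt{n-1}-2}$ --- the radial counterpart of Lemma \ref{key-le1} --- and not the pointwise statement $s^\gamma|u_r(s)|\le C\,t^\gamma|u_r(t)|$ for all $s\le t$. Upgrading the integral bound to a pointwise lower bound on $|u_r|$ at every large radius is precisely the nontrivial step (Villegas does it using the ODE satisfied by $u_r$), and your sketch replaces it with ``or because the opposite monotonicity statement holds after reversing roles,'' i.e.\ you do not commit to, or justify, the direction of the inequality. Until that lemma is stated in the correct direction and proved, the subsequent integration has nothing to integrate. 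The remaining ingredients (nonvanishing of $u_r$ somewhere for nonconstant $u$, eventual constant sign of $u_r$, the case split at $\gamma=1$ versus $\gamma\ne1$, and the ``in particular'' deduction) are fine as sketched. It is worth noting that the paper's own route to the nonradial Theorem \ref{lower} deliberately avoids this pointwise issue by working throughout with annulus averages (Lemmas \ref{key-le1}--\ref{L-bou} and Proposition \ref{k-po}), which is why its conclusion \eqref{g-d2} is phrased as a lower bound on $\mint{-}_{B_{4R}\setminus B_R}|u|$ rather than on $|u(x)|$ itself.
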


Note that for  radial stable solutions $u(x)$, the condition
 \eqref{g-d2} is equivalent to  \eqref{g-d3}.
Indeed, by \cite{v07},   $u(r)=u(re_1)$ is always monotone, and hence
 $$\min\{|u(4r)|,|u(r)|\}\le
\mint{-}_{B_{4r}\setminus B_r}|u(x)|\,dx\le \max\{|u(4r)|,|u(r)|\} \quad\forall r>0,$$
which implies the equivalence between   \eqref{g-d2}  and \eqref{g-d3}.



Let $\bz_n= -\frac {n}2+2+\sqrt{n-1}$. Then $\bz_n <0$ when $n\ge 11$ and $\bz_n>0$ when $ n\le 9$. The sharpness of Theorem \ref{rlp} (and also Theorem \ref{lower}) is  demonstrated in the following sense by  Villegas \cite{v07}
  (with a slight modification at $n=10$). 
 \begin{enumerate}
\item[(i)]  When $n \ne 10$,   the radial smooth function
 $ (1+|x|^2) ^{\frac  { \bz_n}2  }  $
 is a stable  solution to  the equation $-\bdz u=f_{\bz_n}(u)$  in $\rr^n$,
 where   when $n\ge11$,
\begin{align*}
&f_{\bz_n}(s):=\lf\{\begin{array}{ll}
0,\quad  &\quad{\rm if}\quad s\le0,\\
\bz_n(\bz_n-2)s^{1-
 \frac{4}{\bz_n}}-\bz_n(\bz_n+n-2)s^{1-\frac{2}{\bz_n}}, \quad &\quad{\rm if}\quad s>0
\end{array}\r.
\end{align*}
and  when $n\le 9$,\begin{align*}
 f_{\bz_n}(s):=\lf\{\begin{array}{ll}
\bz_n(\bz_n-2)s^{1-
 \frac{4}{\bz_n}}-\bz_n(\bz_n+n-2)s^{1-\frac{2}{\bz_n}},\quad  &\quad{\rm if}\quad s\ge1,\\
-(\bz_n-2)(n+2)(s-1)-n\bz_n, \quad &\quad{\rm if}\quad s<1.
\end{array}\r.
\end{align*} See  \cite[Example 3.1]{v07}  for details.
Note that, when $n\ge 11$, by $\bz_n <0$  and $\bz_n+n-2>0$, we have $f_{\bz_n}\ge0$ in $\rr$;
 while when $n\le 9$, $f_{\bz_n}$ changes sign in $\rr$.

 \item[(ii)] 
 When $n=10$, the radial smooth function
$ -\frac12\log(1+|x|^2)$ is a stable solution to the equation
$-\Delta u= f(u)$ in   $\rn$, where $f(s)=(n-2)e^{2s}+2e^{4s}\ge 0$ in $\rr$.  This is  a slight modification of the \cite[Example 3.1]{v07} with $n=10$. See the appendix for details.

  \end{enumerate}

For general (nonradial) stable solutions $u\in C^2(\rn)$ to $-\Delta u=f(u)$ in $\rn$,  it is then natural to ask if   certain Liouville properties similar to Theorem \ref{rlp} hold. 
Namely,  when $f$ satisfies certain regularity assumption, 
\begin{itemize}
\item if $u$  satisfies \eqref{g-d}, then  is it necessary that $u$ is a constant?

\item  if $u$ is nonconstant,  is it
possible to give some sharp lower bound for $|u|$ toward $\infty$?
\end{itemize}

Suppose  that $0\le f\in C^1(\rr)$ and $u\in C^2(\rn)$ is a stable solution to \eqref{st-eq2}.
When $n\le 4$, Dupaigne-Farina \cite{df} proved that if $|u|$ is bounded, then $u$ must be a constant.
Recently, with  the aid of Cabr\'e et al \cite{cf}, Dupaigne-Farina \cite{df20}  showed
that  if $n\le 9$  and $u(x)\ge -C[1+\log |x|]^{\gz}$ for some $\gz\ge 1$ and $C>0$, or if $n=10$ and $u\ge -C$ for some constant $C>0$,  then $u$ must be a constant. 
When $n\ge 10$, our result Theorem~\ref{lower} finally answers the two questions above. 


%




\subsection{Ideas of the proofs}

We sketch the ideas   to prove Theorem \ref{th} and Theorem \ref{lower}.
All  of them  heavily rely on the the following decay estimate on the Dirichlet energy.

\begin{lem}\label{key-le1}
Let  $n\ge10$ and $f\in C^{0,1}(\rr)$.
For any $y\in\rn$ and $t>0$, if
  $u\in C^2(B_{2t}(y))$   is a stable solution  to the
equation \eqref{st-eq} in $B_{2t}(y)$, one has
\begin{align}\label{mor1}
\left(\frac rt\right)^{-2(1+\sqrt{n-1})}\int_{B_{r}(y)}|Du|^2\,dx\le C(n )\int_{B_{t}(y)\setminus B_{  t/2}(y) }|Du|^2\,dx,\quad
\forall r\le  \frac  t2.
\end{align}
\end{lem}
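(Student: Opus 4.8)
The plan is to derive the energy decay estimate from the stability inequality combined with an integration-by-parts (Pohozaev-type) identity, following the circle of ideas in Cabré--Figalli--Ros-Oton--Serra \cite{cf}. First I would normalize: by translation and scaling it suffices to take $y=0$ and $t=2$, so that $u\in C^2(B_4)$ is a stable solution in $B_4$ and we must show $r^{-2(1+\sqrt{n-1})}\int_{B_r}|Du|^2\,dx\le C(n)\int_{B_2\setminus B_1}|Du|^2\,dx$ for all $r\le 1$. Set $\varphi(r):=\int_{B_r}|Du|^2\,dx$, a nondecreasing function of $r$; the goal is the differential/comparison inequality that $r\mapsto r^{-2(1+\sqrt{n-1})}\varphi(r)$ is essentially controlled by its value near $r=1$, i.e. that $\varphi$ cannot decay too slowly as $r\downarrow 0$.

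The key analytic input is the following: testing the stability inequality \eqref{st-in} with $\xi=c\eta$ where $c$ is a radial cutoff and $\eta$ is (a regularization of) $|Du|$, and using the Sternberg--Zumbrun / geometric Poincaré inequality on level sets, one obtains a weighted estimate of the form
\begin{equation*}
\int_{\rn}\Big(|D_T|Du||^2+|Du|^2\,|A|^2\Big)c^2\,dx\le \int_{\rn}|Du|^2\,|Dc|^2\,dx,
\end{equation*}
where $D_T$ is the tangential gradient along level sets and $|A|$ the second fundamental form; the term $|Du|^2|A|^2$ is what produces the dimensional constant $n-2\sqrt{n-1}-$ through the sharp inequality $|A|^2\ge |D_T\log|Du||^2$ type bounds valid for stable solutions, together with the Simons-type inequality. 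Combining this with a Pohozaev identity for $\int_{B_r}|Du|^2$ (which relates $\varphi'(r)$, the boundary integral $\int_{\partial B_r}|Du|^2$, and the radial derivative term $\int_{\partial B_r}|\partial_\nu u|^2$) yields a closed differential inequality
\begin{equation*}
\big(2+2\sqrt{n-1}\big)\,\varphi(r)\le r\,\varphi'(r)+(\text{controlled terms}),
\end{equation*}
which integrates to $\varphi(r)\le C\,r^{2+2\sqrt{n-1}}\varphi(1)$, and finally $\varphi(1)\le C\int_{B_2\setminus B_1}|Du|^2$ follows from a Caccioppoli estimate using that $u$ solves \eqref{st-eq} with $f\ge0$ (so that $Du$ satisfies a favorable differential inequality, or via the standard interior energy bound for stable solutions) — this last step is where nonnegativity of $f$ enters, giving a reverse-type or interior-to-annulus energy control.

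The main obstacle I anticipate is making the cutoff-and-test argument in the stability inequality rigorous when $|Du|$ vanishes (the critical set of $u$), since $|Du|$ is only Lipschitz and $\log|Du|$ or $|Du|^{-1}$ appear; this requires the standard regularization $(\varepsilon^2+|Du|^2)^{1/2}$ and careful passage to the limit, exactly as handled in \cite{cf}, and controlling the resulting error terms uniformly in $\varepsilon$. A secondary technical point is extracting the sharp constant $1+\sqrt{n-1}$ rather than a suboptimal one: this forces the use of the sharp Sternberg--Zumbrun inequality together with the optimal choice of radial test weight $c(x)=c(|x|)$ (a power of $|x|$ truncated near $0$ and near the outer boundary), and balancing the exponent in the ODE so that the decay rate matches $2(1+\sqrt{n-1})$; any slack in these choices degrades the exponent. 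Once the differential inequality is in hand, the integration and the final Caccioppoli step are routine.
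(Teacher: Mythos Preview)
Your plan departs from the paper's argument and, as written, has a genuine gap at the step that is supposed to produce the sharp exponent $2(1+\sqrt{n-1})$. The paper does \emph{not} test stability with $\xi=c\,|Du|$ (the Sternberg--Zumbrun choice); it uses the Pohozaev-type test $\xi=(x\cdot Du)\,\eta$, which is precisely the content of Lemma~\ref{key-lin} (i.e.\ \cite[Lemma~2.1]{cf}). With the radial weight $\eta$ equal to the constant $r^{-a/2}$ on $B_r$ and to $|x|^{-a/2}\phi$ on $A_{r,1}$ (with $\phi$ a standard cutoff), the integrand on the annulus reduces to
\[
(n-2-a)\,|Du|^2+\Big(2a-\tfrac{a^2}{4}\Big)\,(x\cdot Du)^2|x|^{-2}.
\]
For $n\ge10$ and $a=2(1+\sqrt{n-1})$ one has $2a-\tfrac{a^2}{4}\le0$, so this expression dominates $(n-2+a-\tfrac{a^2}{4})\,|Du|^2=0$; the sharp exponent is simply the root of the quadratic $n-2+a-\tfrac{a^2}{4}=0$ and falls out algebraically in one stroke, with no ODE integration, no Simons-type inequality, and no sign hypothesis on $f$.

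Your route instead controls $\int c^2\big(|D_T|Du||^2+|A|^2|Du|^2\big)$ by $\int|Dc|^2|Du|^2$, and then asks a ``Simons-type inequality'' to convert the level-set curvature term into a decay rate for $\int|Du|^2$. But Simons' inequality is an identity for the second fundamental form of \emph{minimal} hypersurfaces; the level sets of a stable solution to $-\Delta u=f(u)$ are not minimal, and there is no analogue available that manufactures the specific constant $1+\sqrt{n-1}$ (the bound ``$|A|^2\ge|D_T\log|Du||^2$'' you mention is not a theorem in this context). Without a concrete substitute for this step the differential inequality you aim for has no derivation. Two smaller points confirm the mismatch with the actual lemma: first, Lemma~\ref{key-le1} carries \emph{no} hypothesis $f\ge0$---nonnegativity enters only later, in Lemma~\ref{L-bou}---so your final step cannot invoke it; second, the annulus control $\int_{B_{t/2}}|Du|^2\le C\int_{A_{t/2,t}}|Du|^2$ is not a Caccioppoli estimate but is obtained in the paper by applying the already-proved inequality once at the fixed scale $r=t/2$ (a hole-filling step).
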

\noindent See Section 2 for the proof of Lemma \ref{key-le1}; the key point is that we  take  a suitable test function in a celebrated lemma  of Cabr\'e et al \cite{cf} (see Lemma \ref{key-lin} below).

We also recall the following lemma, which was essentially  established in
   \cite{cf}, see Lemma A.2 and Proposition 2.5 with its proof therein. For the convenience of the reader, we give a sketch of the proof at the beginning of Section 3. 
\begin{lem}\label{L-bou}
 Let  $0\le f\in C^{0,1}(\rr)$. For any stable solution $u\in C^2(B_{2t}(y))$    to the
equation \eqref{st-eq} in $B_{2t}(y)$, one has
\begin{align}\label{w-4}
\left(\int_{B_{  t/2}(y)}|Du|^2\,dx\right)^{ 1/2}\le  C(n )t^{-n/2}\int_{B_{ t}(y)}|Du|\,dx
\end{align}
 and
\begin{align}\label{w-5}
\int_{B_{  t/2}(y)}|Du|\,dx
\le C(n )t^{-1} \int_{B_t(y)}|u|\,dx.
\end{align}
\end{lem}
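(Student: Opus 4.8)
\emph{Proof proposal (following \cite{cf}).}
After a translation we may take $y=0$, and since both \eqref{w-4} and \eqref{w-5} scale correctly under $u\mapsto u(t\,\cdot)$ it suffices to consider $t=1$: thus $u\in C^2(B_2)$ is a stable solution of \eqref{st-eq} in $B_2$, and the plan is to prove
\begin{equation*}
\lf(\int_{B_{1/2}}|Du|^2\,dx\r)^{1/2}\le C(n)\int_{B_1}|Du|\,dx
\qquad\text{and}\qquad
\int_{B_{1/2}}|Du|\,dx\le C(n)\int_{B_1}|u|\,dx .
\end{equation*}
I would treat these separately: the second uses only $f\ge0$, whereas the first is where stability is essential.

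For \eqref{w-5}: since $f\ge0$, $u$ is superharmonic in $B_1$, so on an intermediate ball $B_{3/4}$ I would write $u=h+p$ with $h$ harmonic ($h=u$ on $\partial B_{3/4}$) and $-\bdz p=f(u)$ in $B_{3/4}$, $p=0$ on $\partial B_{3/4}$; then $p\ge0$ by the maximum principle. Testing \eqref{st-eq} against a cutoff $\zeta$ (with $\zeta\equiv1$ on $B_{3/4}$) and using $f\ge0$ bounds $\int_{B_{3/4}}f(u)\,dx\le\int_{B_1}u(-\bdz\zeta)\,dx\le C(n)\int_{B_1}|u|\,dx$. Representing $p$ by the Green function of $B_{3/4}$ and using $0\le G\ls|x-z|^{2-n}$, $|D_xG|\ls|x-z|^{1-n}$ together with Young's convolution inequality gives $\int_{B_{1/2}}|Dp|\,dx+\int_{B_{3/4}}p\,dx\ls\int_{B_{3/4}}f(u)\,dx\le C(n)\int_{B_1}|u|\,dx$; and since $h=u-p$ is harmonic in $B_{3/4}$, the interior gradient estimate yields $\int_{B_{1/2}}|Dh|\,dx\ls\int_{B_{3/4}}|h|\,dx\le\int_{B_{3/4}}|u|\,dx+\int_{B_{3/4}}p\,dx\le C(n)\int_{B_1}|u|\,dx$. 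Adding the bounds for $Dh$ and $Dp$ proves \eqref{w-5}.

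For \eqref{w-4}: set $v:=|Du|$ and $g:=f'_-(u)$. The Bochner identity $\tfrac12\bdz|Du|^2=|D^2u|^2-g|Du|^2$ together with Kato's inequality shows that $v\ge0$ is a weak subsolution in $B_2$ of the Schr\"odinger operator $\bdz+g$, while the stability inequality \eqref{st-in} says precisely that the quadratic form of $-\bdz-g$ is nonnegative on $C_c^\infty(B_2)$, i.e.\ $\bdz+g$ is a ``stable'' operator. The plan is then to invoke the local boundedness of nonnegative subsolutions of such an operator — which is essentially \cite[Lemma A.2, Proposition 2.5]{cf} — in the form $\sup_{B_{1/2}}v\le C(n)\int_{B_1}v\,dx$ with $C(n)$ dimensional: one first obtains this with $\|v\|_{L^2(B_{3/4})}$ on the right (using \eqref{st-in} to absorb the potential) and then lowers the integrability exponent to $1$ by the standard chain-of-balls iteration, exploiting $\int v^2\le(\sup v)\int v$. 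Since $\lf(\int_{B_{1/2}}|Du|^2\r)^{1/2}\le|B_{1/2}|^{1/2}\sup_{B_{1/2}}|Du|$, this gives \eqref{w-4}. For $f$ merely Lipschitz I would run Step 2 for smooth approximations $f_\varepsilon\to f$, so that the solutions are smooth and the Bochner identity is classical, and pass to the limit via interior elliptic estimates, as in \cite{cf}.

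The main obstacle will be the input to \eqref{w-4}: the local $L^\infty$ bound for $|Du|$ with a constant \emph{independent of $f$ and $u$}. A direct Moser iteration does not close — using \eqref{st-in} to control $\int g\,(v^{p}\varphi)^2$ produces, for $p\ne1$, a term of the wrong sign — so the stability inequality must be exploited in a form adapted to $v=|Du|$ and to the structure $-\bdz u=f(u)$, exactly as in \cite{cf}; by comparison the proof of \eqref{w-5} is entirely routine.
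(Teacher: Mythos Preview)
Your argument for \eqref{w-5} is correct and is essentially the content of \cite[Lemma~A.2]{cf}, which the paper simply cites.

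Your plan for \eqref{w-4}, however, has a genuine gap: the estimate $\sup_{B_{1/2}}|Du|\le C(n)\int_{B_1}|Du|\,dx$ that you aim for is \emph{false} in dimension $n\ge 10$, so no argument along Bochner--Kato--Moser lines can produce it. Indeed, on $\rr^{10}$ the extremal solution $u^\star(x)=-2\log|x|$ of $-\bdz u=16\,e^{u}$ is the limit in $W^{1,2}(B_1)$ of smooth stable solutions $u_\lz$ with uniformly bounded $\|Du_\lz\|_{L^1(B_1)}$; a uniform sup bound on $|Du_\lz|$ would force $|Du^\star|=2/|x|$ to be bounded near the origin. Equivalently, with $g=f'(u)=16/|x|^2$ the operator $\bdz+g$ is exactly at the Hardy threshold $(n-2)^2/4=16$, and $v=2/|x|$ is an unbounded nonnegative subsolution of $(\bdz+g)v\ge 0$; so ``stable Schr\"odinger operator $\Rightarrow$ local boundedness of nonnegative subsolutions'' already fails for this very $g$. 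You correctly noticed that Moser iteration does not close for $p\ne 1$, but the remedy is not a cleverer use of stability on $v=|Du|$: there is no such $L^\fz$ bound to be had.

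What \cite{cf} actually proves (and what the paper invokes) is only the $L^2$ bound, and the mechanism is entirely different from Bochner--Kato. One inserts the test function $\xi=(x\cdot Du)\eta$ into the stability inequality \eqref{st-in}, obtaining Lemma~\ref{key-lin}; choosing $\eta$ a radial cutoff yields an inequality of the type
\[
\int_{B_\rho}|Du|^2\,dx \le C(n)\int_{B_{2\rho}\setminus B_\rho}|x|^{-2}(x\cdot Du)^2\,dx + \text{(lower order)},
\]
and the radial derivative on the annulus is then controlled, after integration by parts along rays and H\"older, by $\|Du\|_{L^1}$ (or $\|u\|_{L^1}$) on a slightly larger ball. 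An absorption--iteration over concentric balls gives $\|Du\|_{L^2(B_{1/2})}\le C(n)\|Du\|_{L^1(B_1)}$; this is exactly the proof of \cite[Proposition~2.5]{cf} with the step $\|Du\|_{L^1}\le C\|u\|_{L^1}$ omitted, as the paper remarks. So for \eqref{w-4} you should replace the Bochner--Kato route by the $(x\cdot Du)\eta$ test function.
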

Applying Lemma \ref{key-le1}, Lemma \ref{L-bou} and some known boundary estimate, we are able
to prove Theorem 1.2 and Corollary 1.3. This is clarified in Section 3.  

In order to prove Theorem \ref{lower},
an auxiliary and crucial  proposition is
 shown in Section 4, which is   especially applied in the case $n=10$. 
 
\begin{prop}\label{k-po}
Let $n\ge3$.
Suppose that    $u\in W^{1,1}_\loc(\rn)$   is superharmonic, that is, $-\Delta u\ge 0$ in $\rn$ in distributional sense.
 For any $0<r<R<\infty$ we have
\begin{align}\label{w-6}\int_{B_R\setminus B_r
} |Du||x|^{-n+1}\, dx \le C(n)  \mint-_{B_{  r/2}\setminus B_{  r/4}}|u  |\,dz+C(n)
\mint-_{B_{4R}\setminus B_{2R}}|u |\,dz.\end{align}
\end{prop}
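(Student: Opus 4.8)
The plan is to estimate the weighted gradient integral $\int_{B_R\setminus B_r}|Du|\,|x|^{1-n}\,dx$ by a dyadic decomposition into annuli $A_j:=B_{2^{j+1}\rho}\setminus B_{2^j\rho}$ and to control the gradient on each annulus by a Caccioppoli-type inequality adapted to superharmonic functions. On each $A_j$ one has $|x|^{1-n}\sim (2^j\rho)^{1-n}$, so it suffices to bound $(2^j\rho)^{1-n}\int_{A_j}|Du|\,dx$. First I would reduce the $L^1$ gradient bound to an $L^1$ bound on $u$ minus a constant: for a fixed annulus $A$ with comparable inner/outer radii, pick a cutoff $\eta$ supported on a slightly enlarged annulus $\widetilde A$ and equal to $1$ on $A$, and test the distributional inequality $-\Delta u\ge 0$ against $\eta^2(u-c)$ (with $c$ the average of $u$ over $\widetilde A$, or over a reference annulus); integration by parts plus Cauchy--Schwarz gives $\int_A|Du|^2\le C\,(\text{scale})^{-2}\int_{\widetilde A}|u-c|^2$, and then Hölder in the reverse direction on the annulus, together with the fact that superharmonic functions satisfy a weak Harnack / mean-value comparison, upgrades the $L^2$-oscillation on $\widetilde A$ to an $L^1$-oscillation; alternatively one keeps everything at the $L^2$ level and only at the very end passes to $L^1$-averages using the local boundedness/weak-Harnack for superharmonic functions.

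The subtle point is that the right-hand side of \eqref{w-6} involves $u$ \emph{only} on the innermost annulus $B_{r/2}\setminus B_{r/4}$ and the outermost annulus $B_{4R}\setminus B_{2R}$, not on the intermediate annuli. To bridge this gap I would use that $M(s):=\mint-_{\partial B_s}u\,d\sigma$ is a concave function of the radial variable $s^{2-n}$ (equivalently, the spherical average of a superharmonic function is a concave function of $s^{2-n}$ in dimension $n\ge 3$). Concavity forces the graph of $M$ against $t=s^{2-n}$ to lie above the chord joining its endpoint values; consequently $|M(s)|$ on any intermediate radius is controlled by a convex combination of $|M(r/4)|,|M(r/2)|$ and $|M(2R)|,|M(4R)|$ — more precisely one gets a linear (in $s^{2-n}$) upper envelope — and the slope of that envelope is exactly what appears when one sums the telescoping series $\sum_j (2^j\rho)^{1-n}\cdot(2^j\rho)\cdot|M(2^j\rho)-M(2^{j+1}\rho)|$. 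I would make this quantitative: write $\int_{A_j}|Du|\,dx$ crudely as $\le C(2^j\rho)^{n}\big(\,\mint-_{\widetilde A_j}|Du|^2\big)^{1/2}$, then by the Caccioppoli step $\le C(2^j\rho)^{n-1}\big(\mint-_{\widetilde A_j}|u-c_j|^2\big)^{1/2}$, and absorb the oscillation on $\widetilde A_j$ by the monotone/concave control of spherical averages, so that after multiplying by $(2^j\rho)^{1-n}$ and summing over $j$ the series telescopes and collapses to the two boundary contributions.

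The main obstacle I anticipate is making the "only endpoint annuli appear" mechanism fully rigorous at the level of $W^{1,1}_{\loc}$ superharmonic functions (rather than $C^2$), since the concavity-of-spherical-averages argument and the Caccioppoli estimate both have to be run in the distributional sense; one way around this is to mollify $u$ to $u_\varepsilon$, which is smooth and still superharmonic, run the whole argument for $u_\varepsilon$ with constants independent of $\varepsilon$, and pass to the limit using $Du_\varepsilon\to Du$ in $L^1_{\loc}$ and $u_\varepsilon\to u$ in $L^1_{\loc}$. A secondary technical nuisance is the choice of the reference constants $c_j$ for consecutive annuli: one wants them to vary slowly so that $\sum_j|c_j-c_{j+1}|$ again telescopes, and the natural choice $c_j=\mint-_{\widetilde A_j}u$ works precisely because the difference of averages over overlapping comparable annuli is dominated by the oscillation, closing the loop. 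Once these points are handled, \eqref{w-6} follows by summing the two telescoping contributions from the "inner half" $r<|x|<\sqrt{rR}$ and the "outer half" $\sqrt{rR}<|x|<R$ separately.
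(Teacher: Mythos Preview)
Your Caccioppoli step is where the argument breaks. Testing the inequality $-\Delta u\ge 0$ against $\eta^2(u-c)$ does \emph{not} produce $\int_A|Du|^2\le C(\mathrm{scale})^{-2}\int_{\widetilde A}|u-c|^2$ for superharmonic functions. If $\eta^2(u-c)\ge 0$, pairing with the nonnegative measure $-\Delta u$ gives
\[
0\le \int Du\cdot D\big(\eta^2(u-c)\big)=\int|Du|^2\eta^2+2\int(u-c)\eta\,Du\cdot D\eta,
\]
which is a \emph{lower} bound on $\int|Du|^2\eta^2$, not an upper bound; and if $u-c$ changes sign the pairing carries no sign information at all. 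The inequality you wrote is the Caccioppoli estimate for \emph{sub}solutions (or solutions), not supersolutions. One can salvage a one-sided version by taking $c=\sup_{\widetilde A}u$ and testing with $\eta^2(c-u)\ge 0$, but then the right-hand side involves $\sup_{\widetilde A}u$, which for a superharmonic function need not sit on $\partial\widetilde A$ and is not controlled by the spherical means $M(s)$. This feeds into the second difficulty: the concavity of $s\mapsto M(s^{2-n})$ governs only the radial averages, whereas the oscillation $\int_{\widetilde A_j}|u-c_j|^2$ that would appear on the right of Caccioppoli contains the tangential variation of $u$, which your telescoping mechanism never touches. So the ``only endpoint annuli survive'' step is not just a technicality---it is missing a genuine ingredient.

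The paper bypasses both problems by a potential-theoretic argument rather than a dyadic/energy one. After mollifying, it writes $Du_\delta=D\Delta^{-1}\big[\Delta(u_\delta\eta)\big]$ on $A_{r,R}$ with $\eta$ a cutoff on $A_{r/4,4R}$, expands $\Delta(u_\delta\eta)=(\Delta u_\delta)\eta+2Du_\delta\cdot D\eta+u_\delta\Delta\eta$, and estimates the three resulting pieces. The only place where superharmonicity enters is the first piece: since $-\Delta u_\delta\ge 0$ one may drop the absolute value, use the kernel bound $\int_{\rn}|x-y|^{1-n}|x|^{1-n}\,dx\le C(n)|y|^{2-n}$, and then integrate by parts against $|y|^{2-n}\eta(y)$, exploiting that $|y|^{2-n}$ is harmonic on the annulus so that all surviving terms live where $D\eta\ne 0$, i.e.\ on $A_{r/4,r/2}\cup A_{2R,4R}$. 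The other two pieces are handled by direct kernel estimates and an integration by parts that moves the derivative off $u_\delta$. No Caccioppoli inequality and no dyadic telescoping are used.
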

The main idea of showing Proposition~\ref{k-po} goes as follows. First, it is known that
$$ \mbox{$Du_\dz(x)=D\Delta^{-1}[\Delta (u_\dz\eta)](x)$ for $x\in   B_R\setminus B_r$},$$  where $u_\dz$ is a standard smooth mollification of $u$ and $\eta$ is a suitable cut-off function. Next, thanks to  the key fact $-\Delta u_\dz\ge 0$, via some subtle kernel estimate and  integration by parts, we are able to
prove \eqref{w-6} for $ u_\dz$, and then a standard approximation gives  \eqref{w-6} as desired.

Theorem \ref{lower} is eventually proved in Section 5.
The case  $n\ge 11$ is relatively simpler. 
In fact, by Lemma \ref{key-le1} and  Lemma \ref{L-bou}, one can build  up the following
\begin{align*}
  r^{-(1+\sqrt{n-1})}\left(\int_{B_{r} }|Du|^2\,dx\right)^{1/2}&\le C(n )  R^{ \frac n2-2-\sqrt{n-1} }
 \mint-_{B_{3R}\setminus B_{ {3R}/4 }}| u| \,dx \quad\forall 0<r< < R<\fz 
\end{align*}
for stable solutions, which allows us to conclude Theorem \ref{lower} for $n\ge 11$. 

As for the case when $n=10$, we
first  employ   Lemma \ref{key-le1}  and repeat   Lemma \ref{L-bou} to get
\begin{align*}
  r^{-(1+\sqrt{n-1})}\left(\int_{B_{r} }|Du|^2\,dx\right)^{1/2}
&\le C(n) \frac1{\log R}
 \int_{B_{R^2}\setminus B_{4}}|D u||x|^{-n+1} \,dx\quad  \mbox{  $\forall 0<r<<R<\fz$,}
\end{align*}
 which, with the aid of Proposition~\ref{k-po},  is then bounded from above by
$$C(n)\frac 1{\log R} \left(
 \mint-_{ B_2\setminus B_1}|u(z)|\,dz+   \mint-_{B_{2R^2}\setminus B_{R^2} }|u(z)|\,dz\right).$$
From this we conclude Theorem \ref{lower} when $n=10$.

\section{Proof of Lemma \ref{key-le1}}

Towards Lemma \ref{key-le1} we recall the following  apriori bound by Cabr\'e et al \cite[Lemma 2.1]{cf}, which is obtained
by taking test function $(x\cdot Du)\eta$ in the stability condition \eqref{st-in}.

\begin{lem}\label{key-lin}
Let $u\in C^2(B_1)$ be a stable solution to equation \eqref{st-eq} in $B_1$,
with $f\in C^{0,1}(\rr)$. Then for all cut-off function $\eta\in C^{0,1}_c(B_1)$,
\begin{align}\label{key-in}
&\int_{B_1}|x\cdot Du|^2|D\eta|^2\,dx\nonumber\\
&\ge (n-2)\int_{B_1}|Du|^2\eta^2\,dx
+2\int_{B_1}|Du|^2(x\cdot D\eta)\eta\,dx
-4\int_{B_1}(x\cdot Du)(Du\cdot D\eta)\eta\,dx.
\end{align}
\end{lem}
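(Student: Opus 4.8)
\emph{Overall strategy.} I would deduce \eqref{key-in} by combining two independent facts. The first is a Rellich--Pohozaev identity which rewrites the entire right-hand side of \eqref{key-in} as $2\int_{B_1}\Delta u\,(x\cdot Du)\,\eta^2\,dx$, and which uses nothing but $u\in C^2(B_1)$ and integration by parts. The second is the single scalar inequality
\begin{equation}\tag{$\star$}
-2\int_{B_1}f(u)\,(x\cdot Du)\,\eta^2\,dx\ \le\ \int_{B_1}|x\cdot Du|^2|D\eta|^2\,dx,
\end{equation}
which is where stability enters, through the test function $\xi=(x\cdot Du)\,\eta$. Since $\Delta u=-f(u)$, the identity turns ($\star$) into exactly \eqref{key-in}, so the proof reduces to establishing these two statements.

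\emph{The Pohozaev identity.} Set $\sigma:=x\cdot Du\in C^1(B_1)$ and start from $\int_{B_1}\Delta u\,\sigma\,\eta^2\,dx$. Integrating by parts once and using $\partial_i\sigma=\partial_i u+x\cdot D(\partial_i u)$, then rewriting $\sum_i\partial_i u\,(x\cdot D\partial_i u)=\tfrac12\,x\cdot D(|Du|^2)$ and integrating by parts a second time (the cut-off $\eta$ kills all boundary terms), one arrives at
\begin{align*}
2\int_{B_1}\Delta u\,(x\cdot Du)\,\eta^2\,dx
&=(n-2)\int_{B_1}|Du|^2\eta^2\,dx+2\int_{B_1}|Du|^2(x\cdot D\eta)\,\eta\,dx\\
&\quad-4\int_{B_1}(x\cdot Du)(Du\cdot D\eta)\,\eta\,dx.
\end{align*}
This step is purely computational; neither the equation nor stability is used here.

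\emph{Using stability.} The function $\sigma=x\cdot Du$ satisfies, in the distributional sense on $B_1$,
\[
-\Delta\sigma\ =\ 2f(u)+f'_-(u)\,\sigma ,
\]
obtained by differentiating $-\Delta u=f(u)$ (which yields $\Delta\sigma=2\Delta u+x\cdot D(\Delta u)$) and applying the chain rule $D[f(u)]=f'_-(u)\,Du$; moreover $\sigma\in W^{2,p}_{\mathrm{loc}}(B_1)$ for every $p<\infty$ by elliptic regularity, since $\Delta u=-f(u)$ is locally Lipschitz in $x$. Multiplying this equation by $\sigma\eta^2$ and integrating by parts gives
\begin{equation}\tag{$*$}
\int_{B_1}|D\sigma|^2\eta^2\,dx+2\int_{B_1}\sigma\,\eta\,(D\sigma\cdot D\eta)\,dx=\int_{B_1}f'_-(u)\,\sigma^2\eta^2\,dx+2\int_{B_1}f(u)\,\sigma\,\eta^2\,dx .
\end{equation}
On the other hand, inserting $\xi=\sigma\eta$ into the stability condition \eqref{st-in} (legitimate after mollifying $\sigma\eta$, which is Lipschitz with compact support) yields
\[
\int_{B_1}f'_-(u)\,\sigma^2\eta^2\,dx\le\int_{B_1}|D(\sigma\eta)|^2\,dx=\int_{B_1}|D\sigma|^2\eta^2\,dx+2\int_{B_1}\sigma\,\eta\,(D\sigma\cdot D\eta)\,dx+\int_{B_1}\sigma^2|D\eta|^2\,dx .
\]
All integrals here are finite, since $f$ is locally Lipschitz and $u$, $Du$, $D\sigma$ are bounded on $\mathrm{supp}\,\eta$; substituting ($*$) into the right-hand side and cancelling the common term $\int_{B_1}f'_-(u)\sigma^2\eta^2\,dx$ leaves
\[
0\ \le\ 2\int_{B_1}f(u)\,\sigma\,\eta^2\,dx+\int_{B_1}\sigma^2|D\eta|^2\,dx ,
\]
which is exactly ($\star$). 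Combined with the Pohozaev identity, this proves \eqref{key-in}.

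\emph{Main obstacle.} The delicate point is the chain rule $D[f(u)]=f'_-(u)\,Du$ when $f$ is merely Lipschitz, i.e.\ justifying the distributional equation for $\sigma$. Writing $N$ for the (Lebesgue-null) set where $f$ fails to be differentiable, one needs $Du=0$ a.e.\ on $u^{-1}(N)$, which follows from the coarea formula, $\int_{u^{-1}(N)}|Du|\,dx=\int_N\mathcal H^{n-1}(u^{-1}\{t\})\,dt=0$; this is precisely why the ``right'' object in the stability inequality is $f'_-$. The remaining technicalities — approximating $\sigma\eta$ by $C_c^\infty$ functions to apply \eqref{st-in}, and the two integrations by parts (valid since $\sigma\in C^1$ with $\Delta\sigma\in L^\infty_{\mathrm{loc}}$ distributionally) — are routine.
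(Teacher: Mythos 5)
Your proof is correct and follows exactly the route the paper indicates for this lemma (which it quotes from Cabr\'e--Figalli--Ros-Oton--Serra, Lemma 2.1): the Pohozaev-type identity rewriting the right-hand side as $2\int_{B_1}\Delta u\,(x\cdot Du)\eta^2\,dx$, combined with the stability inequality tested on $\xi=(x\cdot Du)\eta$. The technical points you flag (the chain rule for Lipschitz $f$ via the coarea formula, and the density argument for the test function) are handled appropriately.
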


For convenience, for any $0<r<t<\fz$ and $y\in\rn$,   write the annual  $A_{r,t}(y):=B_t(y)\setminus \overline{B_r(y)}$; for simple  write $A_{r,t}=A_{r,t}(0)$.

\begin{proof}[Proof of Lemma \ref{key-le1}]
It suffices to prove the following
\begin{align}\label{mor}
\left(\frac rt\right)^{-2(1+\sqrt{n-1})}\int_{B_{r}(y)}|Du|^2\,dx\le C(n )\int_{A_{r,t}(y)}|Du|^2\,dx,\quad
\forall r\le  \frac t2.
\end{align}
Indeed, applying \eqref{mor} to $ \frac t2$ and $t$, one has
\begin{align} \label{w-1}
\left(\frac 12\right)^{-2(1+\sqrt{n-1})}\int_{B_{\frac t2}(y)}|Du|^2\,dx\le C(n )\int_{A_{\frac t2,t}(y)}|Du|^2\,dx.
\end{align}
If  $
\frac t4\le r<\frac t2$, by $B_r(y)\subset B_{\frac  t2}(y)$  and $\frac 14\le \frac r t\le 1$, \eqref{w-1} gives
\begin{align} \label{w-2}
\left(\frac rt\right)^{-2(1+\sqrt{n-1})}\int_{B_{r}(y)}|Du|^2\,dx\le C(n )\int_{A_{\frac t2,t}(y)}|Du|^2\,dx.
\end{align}
If $0<r< \frac t4$, applying \eqref{mor} to $ r$ and $\frac t2$,and noting  $A_{r,\frac t2}\subset B_{\frac t2}$ one gets
\begin{align*} 
\left(\frac r{t/2}\right)^{-2(1+\sqrt{n-1})}\int_{B_{r}(y)}|Du|^2\,dx\le C(n )\int_{A_{ r,\frac t2}(y)}|Du|^2\,dx\le C(n )\int_{ B_{\frac t2}(y)}|Du|^2\,dx,
\end{align*}
which together with   \eqref{w-1} yields
\begin{align*}
\left(\frac r{t}\right)^{-2(1+\sqrt{n-1})}\int_{B_{r}(y)}|Du|^2\,dx\le C(n )\int_{A_{\frac t2,t}(y)}|Du|^2\,dx.
\end{align*}
From  this and \eqref{w-2} we conclude \eqref{mor1}.

To prove \eqref{mor}, without loss of generality we may assume that $t=1$ and $y=0$.
Indeed, if $u(x)$ is a stable solution to $-\Delta u=f(u)$ in $B_{2t}(y)$, then $v(x)=u(t x+y)$ is the
 stable solution to $-\Delta v=t^2 f(v)$    in $B_2$. Note that up to a change of variable $u$ satisfies \eqref{mor} if and only if
$v$ satisfies \eqref{mor} with $t=1$ and $y=0$.

Write $a=
2(1+\sqrt{n-1})$.
Let $r\in (0,\frac 12]$ be fixed and set
\begin{align}\label{test}
\eta=\left\{
\begin{aligned}
&r^{-\frac{a}{2}}&\quad{\rm if}\ 0\le |x|\le r\\
&|x|^{-\frac{a}{2}}\phi&\quad{\rm if}\ r<|x|\le 1,
\end{aligned}
\right.
\end{align}
where $\phi\in C^\fz_c(B_1)$ satisfies
\begin{equation}\label{test-1}
\phi=1\quad {\rm in}\ B_{3/4}\quad \mbox{and}\quad |D\phi|\le 5\chi_{B_1\setminus B_{3/4}}.
\end{equation}
Clearly, $\eta\in C^{0,1}_c(B_1)$. Since $\eta=r^{-\frac{a}{2}}$ in $B_r$ and hence $D\eta=0$ in $B_r$,
submitting $\eta$ in inequality \eqref{key-in} one has
\begin{align}\label{e2.y3}
&\int_{A_{r,1}}|x\cdot Du|^2|D\eta|^2\,dx\nonumber\\
&\ge (n-2)r^{-a}\int_{B_r}|Du|^2\,dx+(n-2)\int_{A_{r,1}}|Du|^2\eta^2\,dx\nonumber\\
&\quad+2\int_{A_{r,1}}|Du|^2(x\cdot D\eta)\eta\,dx
-4\int_{A_{r,1}}(x\cdot Du)(Du\cdot D\eta)\eta\,dx.
\end{align}
Noting that
$$D\eta=-\frac{a}{2}|x|^{-\frac{a}{2}-2}x\phi+|x|^{-\frac{a}{2}}D\phi
\quad{\rm in}\ A_{r,1},$$
one has
\begin{align}\label{e2.y1}
&
2\int_{A_{r,1}}|Du|^2(x\cdot D\eta) \eta\,dx-4\int_{A_{r,1}}(x\cdot Du)(Du\cdot D\eta) \eta\,dx\nonumber\\
&=-a\int_{A_{r,1}} |Du|^2|x|^{-a}\phi^2\,dx +
2\int_{A_{r,1}}|Du|^2(x\cdot D\phi) \phi|x|^{-a}\,dx \nonumber\\
&\quad+
2a\int_{A_{r,1}} (x\cdot Du)^2|x|^{-a-2}\phi^2\,dx
-4\int_{A_{r,1}}(x\cdot Du)(Du\cdot D\phi) \phi|x|^{-a}\,dx.
\end{align}

Moreover, by
$$|D\eta|^2=\frac{a^2}{4}|x|^{-a-2}\phi^2-2a|x|^{-a-2}(x\cdot D\phi) \phi+|x|^{-a}|D\phi|^2, $$
one can write

\begin{align}\label{e2.y2}
\int_{A_{r,1}}(Du\cdot x)^2|D\eta|^2\,dx&=\frac{a^2}{4}\int_{A_{r,1}} (Du\cdot x)^2|x|^{-a-2}\phi^2\,dx
+\int_{A_{r,1}} (Du\cdot x)^2|x|^{-a}|D\phi|^2\,dx\nonumber\\
&\quad-a\int_{A_{r,1}}(Du\cdot x)^2|x|^{-a-2}(x\cdot D\phi) \phi\,dx.
\end{align}

Using \eqref{e2.y1} for the left hand side of \eqref{e2.y3} and \eqref{e2.y2} for the last two terms in the right  hand side of \eqref{e2.y3},
and then moving all terms including $D\phi$ to the left hand side and all other terms to the right hand side, we have

\begin{align}\label{eq-2}
&\int_{A_{r,1}}|x\cdot Du|^2|D\phi|^2|x|^{-a}\,dx
-2\int_{A_{r,1}}|Du|^2(x\cdot D\phi)\phi|x|^{-a}\,dx\nonumber\\
&\quad+4\int_{A_{r,1}}(x\cdot Du)(Du\cdot D\phi)\phi
|x|^{-a}\,dx-a\int_{A_{r,1}}|x|^{-a-2}(x\cdot Du)^2\phi(x\cdot D\phi)\,dx\nonumber\\
&\ge (n-2)r^{-a}\int_{B_r}|Du|^2\,dx+(n-2)\int_{A_{r,1}}|Du|^2|x|^{-a}\phi^2\,dx\nonumber\\
&\quad-a\int_{A_{r,1}} |Du|^2|x|^{-a}\phi^2\,dx + 2a\int_{A_{r,1}} (x\cdot Du)^2|x|^{-a-2}\phi^2\,dx\nonumber\\
&\quad-\frac{a^2}{4}\int_{A_{r,1}} (Du\cdot x)^2|x|^{-a-2}\phi^2\,dx\nonumber\\
&=
(n-2)r^{-a}\int_{B_r}|Du|^2\,dx\nonumber\\
&\quad+ \int_{A_{r,1}}\left\{(n-2-a)|Du|^2+
\left(2a-\frac{a^2}{4}\right)(Du\cdot x)^2|x|^{-2}
\right\}|x|^{-a}\phi^2\,dx.
\end{align}

Note that by $|D\phi|=0$ in $B_{3/4}$ and $|D\phi|\le 5$ in $B_1$ as in \eqref{test-1} and $a> 2$,
\begin{align}\label{e2.y4}
&\int_{A_{r,1}}|x\cdot Du|^2|D\phi|^2|x|^{-a}\,dx
-2\int_{A_{r,1}}|Du|^2(x\cdot D\phi)\phi|x|^{-a}\,dx\nonumber\\
&\quad+4\int_{A_{r,1}}(x\cdot Du)(Du\cdot D\phi)\phi
|x|^{-a}\,dx-a\int_{A_{r,1}}|x|^{-a-2}(x\cdot Du)^2\phi(x\cdot D\phi)\,dx\nonumber\\
&\le  C(n)\int_{A_{\frac34,1}}|Du|^2 \,dx.
\end{align}
Additionally, note that
  $n\ge 10$ implies  $a=2(1+\sqrt{n-1})\ge 8$,  and hence
$$2a-\frac{a^2}{4}=\frac a4(8-a)\le 0.$$
By $|x|^{-1}|x\cdot Du|\le |Du|$ in $A_{r,1}$ we have
$$(n-2-a)|Du|^2+
\left(2a-\frac{a^2}{4}\right)(Du\cdot x)^2|x|^{-2}
\ge \left(n-2+a-\frac{a^2}{4}\right)|Du|^2.$$
Since
$$
n-2+a-\frac{a^2}{4}= -(\frac a2-[1-\sqrt{n-1}])(\frac a2-[1+\sqrt {n-1}])=0, $$
 we have
\begin{align}\label{e2.y5}  (n-2-a)|Du|^2+
\left(2a-\frac{a^2}{4}\right)(Du\cdot x)^2|x|^{-2} \ge 0 \quad\mbox{in $A_{r,1}$},\end{align}
which  means that  the last term in the right hand side of \eqref{eq-2} is  nonnegative.
From this, together with \eqref{eq-2} and \eqref{e2.y4} we conclude
  \eqref{mor}.   The proof is complete.
\end{proof}

\begin{rem}\rm  Recall that in   \cite{cf}, Cabr\'e et al used
 the test funciton   $\eta=|x|^{-\frac a2}\xi$ with $\xi\in C^\fz_c(B_1)$, which was not enough to get \eqref{mor}.
\end{rem}

\section{Proofs of  Theorem 1.2 and Corollary 1.3}
In this section we   prove Theorem \ref{th} and Corollary \ref{cor}.
First, we show a sketch of the proof of Lemma \ref{L-bou}.

\begin{proof} [Proof of Lemma \ref{L-bou}]
 Up to considering
  $v(x)=u(t x+y)$ as did in the proof of Lemma 1.4, we may assume that $t=1$ and $y=0$.
The inequality \eqref{w-5} is given by \cite[Lemma A.2]{cf}.
The inequality \eqref{w-4}
reads as $\|Du\|_{L^2(B_{{1/2}})}\le C(n)\|Du\|_{L^1(B_{1})}$, and will follow  from the proof of \cite[Proposition 2.5]{cf}. Indeed,
in  \cite[Proposition 2.5]{cf}, Cabre et al  proved that
\begin{equation}\label{sxx} \|Du\|_{L^2(B_{{1/2}})} \le C(n) \|u \|_{L^1(B_{1}) }. \end{equation}
In their proof, first they  obtained a bound of $\|Du\|_{L^2(B_{{1/2}})}$ via $\|Du\|_{L^1(B_{1/2})}$  and also some other small terms.  Next, they used
$\|Du\|_{L^1(B_{{1/2}})}\le C(n)\|u \|_{L^1(B_1) }$.  Finally, via  an iteration argument, they got \eqref{sxx}. If   we  directly apply the iteration argument
 without using $\|Du\|_{L^1(B_{{1/2}})}\le C(n)\|u \|_{L^1(B_1) }$,  one gets
 $\|Du\|_{L^2(B_{{1/2}})}\le C(n)\|Du\|_{L^1(B_{1})}$.
\end{proof}


Recall that $u_{E}=\mint{-}_Eu\,dx$ denotes the integral average of $u$ on a measurable set $E$.
The interior regularity \eqref{tho-1}\& \eqref{tho-2} in Theorem \ref{th} is
a consequence of  Lemma \ref{key-le1} and \eqref{w-4}, together with some standard embedding argument.

\begin{proof}[Proofs of \eqref{tho-1} and \eqref{tho-2} in Theorem \ref{th}]

   Let $u\in C^2(B_2)$ be stable solution
to equation \eqref{st-eq}.
 Write $\bz=n-2-2\sqrt{n-1}$. 
For any $  y\in B_{1/2}$ if $ r>1/8$, by Lemma \ref{L-bou} we have
$$ r^{\bz-n}\int_{B_r(y)\cap B_{1/2}}|Du|^2\,dx \le  C(n)\mint{-}_{  B_{1/2}}|Du|^2\,dx \le C(n)\|u\|^2_{L^1(B_1)}$$
and if $0<r<1/8$, by Lemma \ref{key-le1} and Lemma \ref{L-bou} again we have
$$ r^{\bz-n}\int_{B_r(y)\cap B_{1/2}}|Du|^2\,dx \le  r^{\bz}\mint{-}_{  B_r(y)}|Du|^2\,dx \le  C(n)\mint{-}_{  B_{1/4}(y)}|Du|^2\,dx
 \le  C(n)
\|u\|^2_{L^1(B_1)}.$$
This means that $Du\in M^{2,\beta}(B_{1/2})$ with $\|Du\|_{M^{2,\beta}(B_{1/2})}\le    C(n)
\|u\|_{L^1(B_1)}$.

If $n=10$, then $\bz=2$ and $ {2\bz}/{(\bz-2)}=\fz$. Thanks to Sobolev-Poincar\'e inequality one can easily check that $Du\in M^{2,\beta}(B_{1/2})$ implies   $u\in BMO(B_{1/2}) $ with a norm bound
$\|u\|_{BMO(B_{1/2})}\le C(n)\|Du\|_{M^{2,\beta}(B_{1/2})} $.
If $n\ge11$, then $ p_n= {2\bz}/{(\bz-2)}<\fz$ and $\bz=4+2/(p_n-2)$.
By the embedding result in \cite{a75} and also \cite[Section 4]{cf21},  $Du\in M^{2,\beta}(B_{1/2})$ implies   $u\in M^{2\bz/(\bz-2),\beta}(B_{1/2})$  with its norm bound
$\|u\|_{M^{p_n,\bz}(B_{1/2})}\le C(n)\|Du\|_{M^{2,\beta}(B_{1/2})} $.
This proves \eqref{tho-1} and \eqref{tho-2}.
%
%
\end{proof}

To prove the global regularity \eqref{tho-3} and \eqref{tho-4} in Theorem 1.2,
we need the following
 a priori $L^\fz$-bound  in a neighborhood of
$\partial\Omega$ for $C^2$   solution   when $\Omega$ is a bounded smooth convex domain, see \cite[Proposition 3.2]{c10} and
\cite{cl93,fl82,gnn}. 
For $\rho>0$ we write  $$\Omega_{\rho}:=\{x\in \Omega:{\rm dist}(x,\partial \Omega)<\rho\}.$$
\begin{lem}\label{mov}
Suppose that  that $f\in C^{0,1}(\rr)$ is nonnegative and $\Omega$ is a  smooth convex domain
in $\rr^n$. There
exist positive constants $\rho$ and $\gz$ depending only on
the domain $\Omega$ such that for any positive solution $u\in C^2(\Omega)\cap C^0(\overline \Omega)$ to \eqref{st-eq} one has
\begin{equation}\label{bound}
\|u\|_{L^\fz(\Omega_{\rho})}\le \frac{1}{\gz}\|u\|_{L^1(\Omega)}.
\end{equation}
\end{lem}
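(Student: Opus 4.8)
The plan is to prove Lemma~\ref{mov} by the method of moving planes, in the form used by de Figueiredo--Lions--Nussbaum \cite{fl82}, Gidas--Ni--Nirenberg \cite{gnn} and Cabr\'e \cite[Proposition~3.2]{c10}. Everything reduces to the following geometric--analytic claim: there exist $\rho=\rho(\Omega)>0$ and $\gamma=\gamma(\Omega)>0$ such that for every $x_0\in\Omega_\rho$ one can find a measurable set $I_{x_0}\subset\Omega$ with $|I_{x_0}|\ge\gamma$ and $u\ge u(x_0)$ a.e.\ on $I_{x_0}$. Granting this, $u(x_0)\le\frac1{|I_{x_0}|}\int_{I_{x_0}}u\,dx\le\frac1\gamma\|u\|_{L^1(\Omega)}$, and taking the supremum over $x_0\in\Omega_\rho$ yields \eqref{bound}.

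To produce $I_{x_0}$ I would run moving planes. Fix a unit vector $e$; for $\lambda\in\rr$ set $T_\lambda=\{x\cdot e=\lambda\}$, $\Sigma_\lambda=\{x\in\Omega:x\cdot e<\lambda\}$, and let $\sigma_\lambda$ be the reflection across $T_\lambda$. Since $f$ is Lipschitz, $w_\lambda(x):=u(\sigma_\lambda(x))-u(x)$ solves $-\Delta w_\lambda=c_\lambda(x)w_\lambda$ on $\Sigma_\lambda$ with $\|c_\lambda\|_{L^\infty}\le\lip(f)$, and $w_\lambda\ge0$ on $\partial\Sigma_\lambda$ because $u>0$ in $\Omega$, $u=0$ on $\partial\Omega$, and — by convexity — $\sigma_\lambda(\Sigma_\lambda)\subset\Omega$ for $\lambda$ in the admissible range. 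Starting just past the first contact value $\lambda_0(e)$, where $\Sigma_\lambda$ has small measure so the maximum principle holds for $\Delta+c_\lambda$, and advancing $\lambda$ as far as possible while ruling out (via Hopf's lemma at interior boundary points of the cap and Serrin's corner lemma where $T_\lambda$ meets $\partial\Omega$) that the critical $\lambda$ is reached at a zero of $w_\lambda$, one obtains $w_\lambda>0$ in $\Sigma_\lambda$ and hence $\partial_e u>0$ in the boundary cap, for all $\lambda\in[\lambda_0(e),\lambda_0(e)+\rho]$. Because $\Omega$ is convex with a uniform $C^2$ boundary, this can be done simultaneously for all boundary points and for a cone of directions $e$ about the inner normal, with $\rho$ and the opening angle of the cone depending only on $\Omega$. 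Collecting: for $x_0\in\Omega_\rho$ with nearest boundary point $p$, the function $u$ is nondecreasing along every short segment issuing from $x_0$ in a direction of that cone about $\nu_p$, and the union of these segments is the desired solid set $I_{x_0}$, whose measure is bounded below in terms of $\Omega$ alone.

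The step I expect to be the main obstacle is the \emph{uniformity} of the geometric constants: one must check, using convexity and the global $C^2$ control of $\partial\Omega$, that the reflected caps stay inside $\Omega$ while the plane moves a fixed amount $\rho$, that the initial ``narrow domain'' maximum principle and Serrin's corner lemma apply with constants independent of the boundary point and of the direction within the cone, and that the resulting $I_{x_0}$ has measure bounded below uniformly. A lesser technical point, since $f$ is merely Lipschitz rather than $C^1$, is to carry the whole argument at the level of the comparison functions $w_\lambda$ (where only $\lip(f)$ intervenes) and to extract $\partial_e u\ge0$ by letting $\lambda$ vary, rather than by differentiating the equation.
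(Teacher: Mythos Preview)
The paper does not give its own proof of Lemma~\ref{mov}; it is stated as a known boundary estimate, with references to \cite[Proposition~3.2]{c10} and \cite{cl93,fl82,gnn}. Your moving-planes outline is precisely the argument carried out in those references, and the key steps you isolate (reflected caps stay in $\Omega$ by convexity; start and continuation of the plane via Hopf's lemma and Serrin's corner lemma; construction of the solid set $I_{x_0}$ from a cone of monotonicity directions about the inner normal) are the correct ones.

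One remark on your concern about uniformity: the narrow-domain maximum principle you invoke at the starting step does involve the local Lipschitz constant of $f$ on the range of $u$, so \emph{that} threshold is not uniform in $f,u$. This is harmless, however, because once the plane begins to move the continuation argument pushes it all the way to the maximal geometric position $\bar\lambda(e)$ at which the reflected cap first fails to lie in $\Omega$, and $\bar\lambda(e)-\lambda_0(e)$ is bounded below purely in terms of the convex geometry of $\Omega$. It is this geometric quantity, not the narrow-domain threshold, that yields $\rho$ and hence $\gamma=\inf_{x_0}|I_{x_0}|$.
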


Note that, as $f\ge 0$,   the maximal principle  shows that any  solution $u\in C^2(\Omega)\cap C^0(\overline \Omega)$ to \eqref{st-eq}  with   $0$ boundary is always nonnegative;
and the strong maximal principle further shows that $u$ is always positive in the domain $\Omega$.


\begin{proof}[Proofs of \eqref{tho-3} and \eqref{tho-4} in Theorem 1.2]

Let  $\bz=n-2-2\sqrt{n-1}$ and let $\rho,\gz$   be as in Lemma \ref{mov}.  We first consider the case $n\ge 11$.
For any $y\in\overline {\Omega } $ and $r>0$, write
\begin{align*}
r^{\bz-n} \int_{\Omega \cap B_r(y)}|u|^{p_n}\,dx&= r^{\bz-n} \int_{\Omega_{\rho} \cap B_r(y)}|u|^{p_n}\,dx
+ r^{\bz-n} \int_{(\Omega\setminus \Omega_{\rho})\cap B_r(y)}|u|^{p_n}\,dx\\
&:=\Phi_1(y,r)+\Phi_2(y,r).\end{align*}
To see \eqref{tho-3}, obviously,
we only need to prove
$\Phi_1(y,r) \le    C(n,\Omega)  \|u\|_{L^1(\Omega)} ^{p_n}$
and $\Phi_2(y,r)  \le C(n,\rho,\Omega) \|u\|^{p_n}_{L^1(\Omega)}$
for any $y\in \Omega   $ and $r>0$.

Note that   $$r^{\bz-n}  |\Omega_\rho\cap B_r(y) |\le
C(n) r^{\bz-n} \le C(n) \quad\mbox{when $r<1$ and $\le |\Omega_\rho|$ when $r>1$,}$$
by $2<\bz<n$ and Lemma \ref{mov}, we have
\begin{align*}
\Phi_1(y,r) \le r^{\bz-n} |\Omega_\rho\cap B_r(y) | \|u\|_{L^\fz(\Omega_\rho)}^{p_n} \le    C(n,\Omega)  \|u\|_{L^1(\Omega)} ^{p_n}.
\end{align*}

Next, to get  $\Phi_2(y,r)  \le C(n,\rho,\Omega) \|u\|^{p_n}_{L^1(\Omega)}$
for any $y\in \Omega   $ and $r>0$, we  only need to consider
 $y\in \Omega\setminus   \Omega_{\rho} $ and $0<r<\rho/8$.
Indeed, for $y\in \Omega_\rho$, if $r<\dist(y, \Omega \setminus \Omega_{\rho} )$, then $\Phi_2(y,r)=0$,
and if $r\ge \Omega \setminus \Omega_{\rho}$, then
$\Phi_2(y,r)\le C(n)\Phi_2(\bar y,2r)$, where   $\bar y$ is the closest point in $\Omega\setminus\Omega_\rho $ and $B(y,r)\subset B(\bar y,2r)$.
Moreover for any $y\in \Omega\setminus   \Omega_{\rho} $ and $r\ge\rho/8$,
$$\Phi_2(y,r) \le  \rho^{\bz-n} \int_{\Omega \setminus \Omega_\rho }|u|^{p_n}\,dx\le \sum_{i=1}^N
\rho^{\bz-n} \int_{\Omega \setminus \Omega_\rho \cap B_{\rho/9}(x_i)}|u|^{p_n}\,dx=  \sum_{i=1}^N \Phi(x_i,\rho/9),  $$
 where $\{B(x_i,\rho/9)\}_{i=1}^N$  is a cover of the compact set $\Omega \setminus \Omega_\rho $,   $\{x_i\}_{i=1}^N\subset \Omega\setminus \Omega_\rho$ and  $N$ depending on $ \Omega$ and $\rho$.

On the other hand, for any  $y \in { \Omega\backslash \Omega_{ \rho } }$ and $0<r<\rho/8$, since $u$ is stale  solution in $B_{\rho}(y)\subset\Omega$, by \eqref{tho-2} with a scaling argument we have
$u\in M^{p_n,\bz} (B_{\rho/8}(y))$ with
$\|u\|_{M^{p_n,\bz} (B_{\rho/8}(y))}\le C(n,\rho)\|u\|_{L^1(B_{\rho/2}(y))}$, in particular
$$\Phi_2(y,r)\le r^{\bz}\mint{-}_{B_r(y)}|u|^{p_n}\,dx\le
 C(n,\rho)
\|u\|^{p_n} _{L^1(\Omega)}$$
as desired. This proves \eqref{tho-4}.

In the case   $n=10$,
for any $y\in\Omega$, if $ r>\frac19\rho$, we have
$$ r^{-n}\int_{\Omega\cap B_r(y)}|u|\,dx\le C(n,\rho) \|u\|_{L^1(\Omega)}.$$
Below we assume that $ 0<r<\frac19\rho$.
If  $y\in\Omega\setminus\Omega_{8\rho/9}$, we have $\rho <\frac98\dist(y,\partial\Omega)$. Since $0<r<\frac18\dist(y,\partial \Omega)$ and
  $u$ is a stale  solution in $B_{\dist(y,\partial\Omega)}(y)\subset\Omega$, by \eqref{tho-1} with a scaling we have
$$ \mint{-}_{B_r(y)}|u-u_{B_r(y)}|\,dx \le C(n,\rho)\|u\|_{L^1(B_{\dist(y,\partial\Omega)}(y))}\le C(n,\rho)\|u\|_{L^1(\Omega)}.$$
For  $y\in  \Omega_{8\rho/9}$,  noting $0<r<  \frac19\rho\le \dist(y,\partial\Omega_\rho)$, one has  $\Omega\cap B_r(y)\subset\Omega\setminus \Omega_\rho$. Thus
$$r^{-n}\int_{\Omega \cap B_r(y)}|u|\,dx= r^{-n}\int_{\Omega_\rho\cap B_r(y)}\left|u \right|\,dx\le C(n,\rho)\|u\|_{L^1(\Omega)}.$$
Combining these estimates,
 we obtain \eqref{tho-3}.
\end{proof}
We finally prove Corollary \ref{cor}.

\begin{proof}[Proof of Corollary \ref{cor}]
  Let $u\in W^{1,2}_0(\Omega)$ be
a stable solution to \eqref{st-eq} with $0$ boundary. By \cite[Corollary 3.2.1]{d11}(see also the proof in \cite[Theorem 4.1]{cf}
 and \cite[Theorem 5]{df}), there is a nonnegative, nondecreasing sequence $(f_k)$  of convex functions
in $C^{1}(\rr)$ such that $f_k\to f$ pointwise in $[0,\fz)$  and
a nondecreasing sequence $(u_k)$ in $C^2(\overline \Omega)\cap W^{1,2}_0(\Omega)$ such that
$u_k$ is a weak stable solution to
\begin{align}\label{ap-di}
-\bdz u_k=f_k(u_k)\quad{\rm in}\quad \Omega;\ u_k=0\quad{\rm on}\quad \partial\Omega.
\end{align}
%
and
\begin{equation*}\label{th-1}
u_k\to u\quad {\rm in}\quad W^{1,2}(\Omega)\quad{\rm as}\quad k\to+\fz.
\end{equation*}

If $n=10$, applying \eqref{tho-3} to $u_k$ one has
\begin{align*}
\mint{-}_{\Omega \cap B_r(y)}\left|u_k(x)-\mint{-}_{\Omega \cap B_r(y)}u_k\,dz\right|\,dx
\le \|u_k\|_{{\rm BMO}(\Omega)}\le C(n,\Omega)\int_{\Omega}|u_k|\,dx,\quad
\forall r>0,\ \forall y\in \overline \Omega.
\end{align*}
By $u_k\to u$ in $W^{1,2}(\Omega)$ as $k\to +\fz$ we conclude that $\|u\|_{BMO(\Omega)}\le C(n)\|u\|_{L^1(\Omega)}$ as desired.

If $n\ge 11$, applying \eqref{tho-4} to $u_k$ we have
\begin{align}\label{cor-1}
r^{\bz-n}\int_{\Omega\cap B_r(y)}|u_k|^{p_n}\,dx\le C(n,\Omega,\rho)
(\|u_k\|_{L^1(\Omega)})^{p_n},\quad \forall y\in \overline \Omega,\quad
\forall r>0,
\end{align}
where $\bz=\frac{2p_n}{p_n-2}\in (0,n)$.
By $u_k\to u$ in $W^{1,2}(\Omega)$ as $k\to+\fz$ we deduce that
$u_k\in L^{p_n}(\Omega)$ uniformly in $k\ge 0$, and  hence,
$u_k\to u$ weakly in $L^{p_n}(\Omega)$.  Thus, letting $k\to +\fz$ in \eqref{cor-1} we conclude
 $\|u\|_{M^{p_n,\beta}(\Omega)}\le C(n)\|u\|_{L^1(\Omega)}$ as desired.
\end{proof}

\section{Proof of Proposition \ref{k-po} }

Let $0<r<R<\fz$. Let $\eta\in C^\fz_c(A_{\frac r4, 4R})$ satisfy that
\begin{align}
0\le \eta\le 1 \quad \mbox{in}\ A_{\frac r4,4R}\ \quad \mbox{and}\quad
\eta=1 \ \mbox{in}\ A_{\frac r2,2R}, \label{eta0}\\
  |D\eta|^2+
 |D^2\eta|\le  \frac C{r^2 }\ \mbox{  in } \  A_{\frac r4, \frac r2} \
  \mbox{and}\   |D\eta|^2 +  |D^2\eta|\le  \frac C{R^2}\ \mbox{in $A_{2R, 4R}$,}\label{eta00}
\end{align}
where $C>0$ is  a universal constant.

Let $ u_\delta=u\ast\phi_\dz $ for $\dz>0$, where $\phi_\dz$ is the standard smooth mollifier and supported in $B(0,\dz)$.
Recall that $u\in W^{1,\,1}_\loc(\mathbb R^n)$ and $u_\delta\to u$ in $ W^{1,\,1}_\loc(\mathbb R^n)$.
 Since $-\Delta u\ge 0$ is a locally finite measure,  we have $- \Delta u_\dz=(-\Delta u)\ast\phi_\dz \ge 0$ everywhere.
By $ u_\dz\eta \in C^\fz_c(\rn)$, one has  $$u_\dz\eta(x) =\Delta^{-1} [\Delta (u_\dz\eta)](x)=c(n)\int_{\rn}\frac1{|x-y|^{n-2}}\Delta (u_\dz\eta)(y)\,dy\quad\forall x\in\rn$$
and hence
$$D (u_\dz\eta)(x) =D\Delta^{-1} [\Delta (u_\dz\eta)](x)=c(n)(2-n)\int_{\rn}\frac{x-y}{|x-y|^{ n}}  \Delta (u_\dz\eta)(y)\,dy\quad\forall x\in\rn.$$
Noting $$\Delta (u_\dz\eta)(y)=\Delta  u_\dz(y) \eta (y) +u_\dz(y) \eta (y)+
D  u_\dz(y)\cdot D\eta (y)  ,$$
for $0<\dz<<r/8$ we write
\begin{align*}
\int_{A_{r,R}}  {|Du_\dz |}{|x|^{-n+1}}\, dx=&  \int_{A(r,\,R)} |D(u_\dz \eta)|{|x|^{ -n+1}}\, dx\\
= & \int_{A_{r,R}} \left|\int_{\mathbb R^n}\frac{x-y}{|x-y|^{ n}}   \Delta (u_\dz\eta) (y)\, dy\right||x|^{-n+1}\, dx\\
\le&   C(n)\int_{A_{r,R}} \left|\int_{\mathbb R^n}\frac{x-y}{|x-y|^{ n}}   \Delta  u_\dz (y)  \eta(y)\, dy\right|{|x|^{ -n+1}}\, dx\\
&+  C(n)\int_{A_{r,R}} \left|\int_{\mathbb R^n}\frac{x-y}{|x-y|^{ n}}     u_\dz (y) \Delta  \eta(y)\, dy\right|{|x|^{ -n+1}}\, dx\\
&+  C(n)\int_{A_{r,R}} \left|\int_{\mathbb R^n}\frac{x-y}{|x-y|^{ n}}   D  u_\dz (y) \cdot D \eta(y)\, dy\right|{|x|^{ -n+1}}\, dx\\
=:&I_1+I_2+I_3.
\end{align*}

In order to control $I_1$ from above, first 
by $-\Delta  u_\dz\ge0$ and \eqref{eta0} one has
$$
I_1\le  \int_{\rn}  \int_{\rn}  |x-y|^{-n+1}|x|^{-n+1}\, dx(-\Delta  u_\dz)(y)  \eta(y) \, dy. $$
Employing the triangle inequality, for $y\in \rn$, we further get
\begin{align}\label{yn2}
   \int_{\rn}|x-y|^{-n+1}|x|^{-n+1}\,dx & \le 2^{n-1} \int_{\{|x|>2|y|\} }|x|^{-2n+2}\, dx +  2^{n-1}  \int_{\{ |x|<\frac12|y|\} }|x|^{-n+1}|y|^{-n+1}\, dx \nonumber\\
&\quad +    \int_{\{\frac12|y| \le |x|\le 2|y|\} }|x-y|^{-n+1}|y|^{-n+1}\, dx \nonumber\\
 &   \le  C(n)|y|^{-n+2}+ C(n)|y|^{-n+2}+ \int_{\{|y-x|\le 3|y|\}}|x-y|^{-n+1}|y|^{-n+1}\, dx\nonumber\\
&\le  C(n)|y|^{-n+2}.
\end{align}
This together with $-\Delta u_\dz\ge0$ again gives
$$I_1 \le C(n)    \int _{\rn }  (-\Delta u_\dz )  |y|^{-n+2}\eta(y)\, dy.$$

Via integration by parts  and using   $\eta
\in C^\fz_c(A_{\frac r4,4R})$, we have
$$
\int _{\rn }  (-\Delta u_\dz )  |y|^{-n+2}\eta(y)\, dy= \int _{A_{\frac r4,4R} }    u_\dz    [-\Delta|y|^{-n+2}\eta(y)+ D|y|^{-n+2}\cdot D\eta(y)-|y|^{-n+2}\Delta\eta(y)]
 \,dy. $$
Observing that $\Delta |y|^{n-2}=0$  in $A_{\frac r4,4R}$,  and   using \eqref{eta0}\&\eqref{eta00} we arrive at
\begin{align*}
I  _1
&\le C(n)   \int _{A_{\frac r4,4R}}  u_\dz(y)[  (2-n) |y|^{-n} y\cdot D\eta(y) - |y|^{-n+2} \Delta \eta(y)]\, dy\\
&\le C(n)  \int_{A_{\frac r4,4R}}  |u_\dz (y)| [r^{-n} \chi_{A_{\frac r4,\frac r2}}+R^{-n}\chi_{A_{2R,4R}}] \, dy \\
&\le  C(n) \mint-_{A_{\frac r4,\frac r2}}|u_\dz |\,dz+ \mint-_{A_{2R,4R}}|u_\dz |\,dz.
\end{align*}

For $I_2$,  by \eqref{yn2} and \eqref{eta0}  
\begin{align*} I_2 &\le
  \int_{\rn} \int_{A(r,\,R)} |x-y|^{-n+1} |x|^{-n+1}\, dx  | u_\dz| (y) |\Delta  \eta (y)|\, dy \\
&\le C(n)\int_{\rn} |y|^{-n+2}  | u_\dz| (y) |\Delta  \eta (y)|\,dy\\
&\le C(n)  \int_{\rn}  |u_\dz (y)| [r^{-n} \chi_{A_{\frac r4,\frac r2}}+R^{-n}\chi_{A_{2R,4R}}] \, dy \\
& \le C(n)  \mint-_{A_{\frac r4,\frac r2}}|u_\dz |\,dz+ C(n)  \mint-_{A_{2R,4R}}|u_\dz |\,dz .  \end{align*}

Now let us estimate $I_3$.
First via integration by parts one gets
\begin{align*}
& \int_{\mathbb R^n}|x-y|^{-n}(x-y) D  u_\dz(y) \cdot D_\dz \eta(y)\, dy \\
&=
 \int_{\mathbb R^n}|x-y|^{-n}(x-y)   u_\dz (y)  \Delta  \eta(y)\, dy+
 \int_{\mathbb R^n}  u_\dz (y) D[|x-y|^{-n}(x-y) ]     D \eta(y)\, dy.
 \end{align*}
Since
$$
 |D[|x-y|^{-n}(x-y) ] |\le C(n)|x-y|^{-n},$$
we   obtain
\begin{align*}
&\left|\int_{\mathbb R^n}|x-y|^{-n}(x-y) D  u_\dz(y) \cdot D  \eta(y)\, dy\right|\\
&\le C(n)\left| \int_{\mathbb R^n}|x-y|^{-n+1}    u _\dz(y)   \Delta  \eta(y) \, dy\right|+ C(n)
\int_{\mathbb R^n}   |x-y|^{-n }   |u_\dz (y)|   |D \eta(y)|\, dy.
 \end{align*}
As a consequence,
\begin{align*}
I_3\le&  C(n) I_2  +
C(n)\int_{\mathbb R^n}\int_{A_{r,R}}  |x-y|^{-n } |x|^{-n+1} \,dx |u_\dz (y)|  |D \eta(y)|\, dy=: C(n) I_2+ C(n)\wz {I_3}.
  \end{align*}

In order to estimate $\wz {I_3}$, first we note that \eqref{eta0} gives
$$\wz {I_3}\le C(n) \int_{\rn}\int_{A_{r,R}}  |x-y|^{-n } |x|^{-n+1} \,dx |u_\dz (y)| [r^{-1} \chi_{A_{\frac r4,\frac r2}}+R^{-1}\chi_{A_{2R,4R}}] \, dy $$
For any $x\in A_{r,R}$, if $ y\in A_{\frac r4,\frac r2}$  we have   $|x-y|\ge |x|/2 $
and hence $$
\int_{A_{r,R}}  |x-y|^{-n } |x|^{-n+1} \,dx\le C(n)\int_{A_{r,R}}|x|^{-2n+1}\,dx\le C(n) r^{-n+1};$$
 if  $ y\in A_{2R,4R}$, then
  $|x-y|\ge  R $  and hence
$$
\int_{A_{r,R}}  |x-y|^{-n } |x|^{-n+1} \,dx\le C(n) R^{-n} \int_{A_{r,R}}   |x|^{-n+1} \,dx \le C(n)R^{-n+1}.$$
Thus it follows that
$$\wz {I_3}\le C(n) \int_{\rn}  |u_\dz (y)| [r^{-n} \chi_{A_{\frac r4,\frac r2}}+R^{-n}\chi_{A_{2R,4R}}] \, dy\le C(n)  \mint-_{A_{\frac r4,\frac r2}}|u_\dz |\,dz+C(n)  \mint-_{A_{2R,4R}}|u_\dz |\,dz.  $$

To conclude,
$$
\int_{A_{r,R}} |Du_\dz ||x|^{-n+1}\, dx\le   C(n) \mint-_{A_{\frac r4,\frac r2}}|u_\dz |\,dz+C(n)   \mint-_{A_{2R,4R}}|u_\dz |\,dz. $$
By letting $\dz\to0$ and noting $ u_\dz\to  u$ in $W^{1,\,1}_\loc$, we conclude \eqref{w-6}.

\section{Proof of Theorem \ref{lower}}
Since $u$ satisfies  \eqref{g-d}, then $u$ does not satisfy \eqref{g-d2}.  We only need to show that
if $u$ is nonconstant, then \eqref{g-d2} holds.
Equivalently, it suffices to show that if $u$ does not satisfy \eqref{g-d2},
then $u$ is a constant.  Namely, there exists a sequence $\{R_j\}_{j\in\nn}$ towards $\infty$ so that  
\begin{align}\label{n10}
\frac 1{\log R_j  } \mint-_{ A_{   R_j    , 4R_j  }}|u(z)|\,dz\to 0\quad\mbox{ as $j\to\fz$\quad when $n=10$ } \end{align}
and
\begin{align}\label{n11} R_j^{ \frac n2 - 2 -\sqrt{n-1} } \mint-_{A_{  R_j, 4R_j}}|u(x)|\,dx\to 0 \quad\mbox{as $j\to\fz$\quad when $n\ge11$.}
\end{align}

On the other hand, given any $0<r<\fz$, applying \eqref{mor1} for any $ R> 4  r$   we have
\begin{align*}
 r ^{- (1+\sqrt{n-1})}
\left(
\int_{B_{r} }|Du|^2\,dx\right)^{1/2}\le C(n )  R ^{- (1+\sqrt{n-1})} \left(\int_{A_ {R ,2R} }|Du|^2\,dx\right)^{1/2}.
\end{align*}
Observe that  the annual  $A_{  1  ,2} $ can be covered by  $\{B_{\frac 18}(y_i)\}_{i=1}^N$ with $y_1,\cdots, y_N\in A_{  1  ,2}$  and  $N\le C(n)$.
 $$ \chi_{A_{1, 2 }}\le \sum_{i=1}^N\chi_ {B_{\frac 18}(y_i) } \le \sum_{i=1}^N\chi_ {B_{\frac 14}(y_i) } \le  C(n)\chi_{A_{\frac 34, 3}}.$$
 Below we consider the case $n\ge11$ and the case $n=10$ separately.

\medskip
\noindent{\bf Case $n\ge 11$.} For each $i$, applying  \eqref{w-4} and \eqref{w-5},   one attains
$$\left(\int_{B_{\frac R8}(Ry_i) }|Du|^2\,dx\right)^{1/2}\le C(n)
R^{-\frac {n+2}2 }   \int_{B_{\frac R4}(Ry_i) }| u| \,dx \le C(n)
R^{ \frac {n-2}2}  \mint-_{A_{\frac {3R}4,3R}}| u| \,dx.
$$
Thus by summing over all these balls,
\begin{align*}
\int_{A_ {R ,2R} }|Du|^2\,dx&\le   C(n)  R^{ n-2}
\left(\mint-_{A_{\frac {3R}4,3R}}| u| \,dx\right)^{2},
\end{align*}
and we eventually obtain
\begin{align*}
  r^{-(1+\sqrt{n-1})}\left(\int_{B_{r} }|Du|^2\,dx\right)^{1/2}&\le C(n )  R^{ \frac n2-2-\sqrt{n-1} }
 \mint-_{A_{\frac {3R}4,3R}}| u| \,dx.
\end{align*}
Taking $R= \frac {4 R_j} 3$, applying \eqref{n11} and letting $j\to\fz$,  one concludes
$$
\int_{B_{r} }|Du|^2\,dx=0 .$$
By the arbitrariness of $r>0$,  we obtain $\|Du\|_{L^2(\rn)}=0$, which implies that $u$ is a constant.

\medskip
\noindent
{\bf Case $n=10$.}
For each $i$, applying  \eqref{w-4},   one attains
$$\left(\int_{B_{\frac R8}(Ry_i) }|Du|^2\,dx\right)^{1/2}\le C(n)
R^{-\frac n2 }   \int_{B_{\frac R4}(Ry_i) }|D u| \,dx \le C(n)
R^{ \frac {n-2}2}  \int_{A_{\frac R2,4R}}|Du| |x|^{-n+1}\,dx.
$$
Thus
\begin{align}\label{iter}
\int_{A_ {R ,2R} }|Du|^2\,dx&\le   C(n)  R^{n-2 }
\left(\int_{A_{\frac R2,4R}}|D u| |x|^{-n+1}\,dx\right)^{2}.
\end{align}
We therefore obtain
\begin{align*}
  r^{-(1+\sqrt{n-1})}\left(\int_{B_{r} }|Du|^2\,dx\right)^{1/2}&\le C(n )  R^{-\frac n2-2-\sqrt{n-1} }
 \int_{A_{\frac R2,4R}}|D u||x|^{-n+1} \,dx\\
&=C(n)
 \int_{A_{\frac R2,4R}}|D u||x|^{-n+1} \,dx,
\end{align*}
where in the last identity we use $ \frac n2 - 2 -\sqrt{n-1}=5-2-3=0$.

For $R> 2^5+r>4$,  let $m$  be the largest integer so that $m\le \log_2 R-3 $.  Applying the  \eqref{iter}   to $2^j R$ with $j=1,\cdots m $, one has
\begin{align*}
  r^{-(1+\sqrt{n-1})}\left(\int_{B_{r} }|Du|^2\,dx\right)^{1/2}
&\le  C(n) \frac 1m \sum_{j=1}^m \int_{A_{\frac {2^jR}2,4(2^jR)}}|Du| |x|^{-n+1} \,dx\\
&\le  C(n) \frac 1m \int_{A_{R,2^{m+2}R}} |Du| |x|^{-n+1} \,dx\\
&\le   C(n) \frac 1{\log R} \int_{A_{4, \frac{R^2}2} }  |Du| |x|^{-n+1} \,dx.
\end{align*}
By \eqref{w-6}, one has
$$
 r^{-(1+\sqrt{n-1})}\left(\int_{B_{r} }|Du|^2\,dx\right)^{1/2}  \le C(n)\frac 1{\log R}
 \mint-_{ A_{1,2}}|u(z)|\,dz+ C(n)\frac 1{\log R^2} \mint-_{ A_{    {R^2} , 2 R^2 }}|u(z)|\,dz.$$
Taking $R=\sqrt{R_j}$ and letting $j\to\fz$,
by \eqref{n10}   one concludes
$$  \int_{B_{r} }|Du|^2\,dx=0.$$   
Then the arbitrariness of $r>0$ implies $\|Du\|_{L^2(\rn)}=0$, which further implies that $u$ is a constant.

 \section*{Appendix \ A radial stable solution when $n=10$}

\par\indent
Suppose $n=10$ in this appendix.  Villegas \cite{v07} proved that
$ \frac 12\log(1+|x|^2)$ is a stable solution to equation
 $-\bdz u=-(n-2)e^{-2u}-2e^{-4u}$ in $\rr^n.$
Note that  $  -(n-2)e^{-2s}-2e^{-4s}\le 0$ in $\rr$.

Below,  we show that
$u=-\frac 12\log(1+|x|^2)$ is a stable solution to the equation
$$-\bdz u=f(u)\quad {\rm in}\quad \rr^n,$$
where $f(s)=(n-2)e^{2s}+2e^{4s}\ge 0$ in $\rr$.

First we show that $u$ is a solution. Indeed, for any $x\in\rn$ a direct calculation  gives
\begin{align*}
-\bdz u(x)&= \left((1+|x|^2)^{-1}x_i\right)_{x_i}= \frac{n}{1+|x|^2}+2\frac{|x|^2}{(1+|x|^2)^2}
= (n-2)\frac{1}{1+|x|^2}+2\frac{1}{(1+|x|^2)^2}.
\end{align*}
By 
 $e^{ 2u(x)}=(1+|x|^2)^{-1}$,
we have
\begin{align*}
-\bdz u(x)&=   (n-2)e^{2u(x)}+2e^{4u(x)} =f(u(x)).
\end{align*}

Next,    we  show that $u$ is stable. Note that $f'(s)=2(n-2)e^{2s}+8e^{4s} $ for $s\in\rr$.
 Given any $x\ne 0$, writing  $r=|x| $ and noting  $e^{ 2u(x)}=(1+|x|^2)^{-1}$, we  have
\begin{align*}
f'(u(x)) =2(n-2)e^{2u(x)}+8e^{4u(x)} =\frac{2(n-2)}{1+r^2}+\frac{8}{(1+r^2)^2}
\end{align*}
 By $n=10$ we have
\begin{align*}
f'(u(x)) &=\frac{16r^2(1+r^2)+8r^2}{ r^2(1+r^2)^2} 
 =\frac{16 r^4+24 r^2}{ r^2(1+r^2)^2} < \frac{16(1+r^2)^2}{{ r^2(1+r^2)^2} }=\frac{(n-2)^2}{4|x|^2}. 
\end{align*} 
 By this, and the Hardy inequality, we have 
$$\int_{\rr^n}f'(u)\xi^2\,dx\le \frac{(n-2)^2}{4}
\int_{\rr^n}\frac{\xi^2}{|x|^2}\,dx
\le \int_{\rr^n}|D\xi|^2\,dx\quad\forall \xi\in C^\fz_c(\rr^n).$$
 Thus   $u $ is a stable solution to
 $-\bdz u=f(u)$ in $\rn$.

\noindent  Fa Peng

\noindent
School of Mathematical Science, Beihang University, Changping District Shahe Higher Education Park
  South Third Street No. 9, Beijing 102206, P. R. China

\noindent{\it E-mail }:  \texttt{fapeng158@163.com}

\bigskip

\noindent Yi Ru-Ya Zhang

\noindent
Academy of Mathematics and Systems Science, the Chinese Academy of Sciences, Beijing 100190, P. R. China

\noindent{\it E-mail }:  \texttt{yzhang@amss.ac.cn}

\bigskip

\noindent  Yuan Zhou

\noindent
School of Mathematical Science, Beijing Normal University, Haidian District Xinjiekou Waidajie No.19, Beijing 10875, P. R. China

\noindent{\it E-mail }:  \texttt{yuan.zhou@bnu.edu.cn}

\end{document}